\documentclass[11pt]{article}

\usepackage{latexsym,color,xcolor,amsmath,amsthm,amssymb,amscd,amsfonts,stmaryrd}
\usepackage{graphicx,bbm}
\usepackage{cancel, mathtools} 
\usepackage{tikz}
\usetikzlibrary{shapes.geometric, arrows}

\tikzstyle{startstop} = [rectangle, rounded corners, 
minimum width=3cm, 
minimum height=1cm,
text centered, 
draw=black, 
]

\tikzstyle{arrow} = [thick,->,>=stealth]

\setlength{\topmargin}{-0.45 in}     
\setlength{\oddsidemargin}{0.3in}  
\setlength{\evensidemargin}{0.3in} 
\setlength{\textheight}{9in}
\setlength{\textwidth}{6.1in} 
\setlength{\footskip}{0.55in}  

\newtheorem{conjecture}{Conjecture}[section]
\newtheorem{theorem}[conjecture]{Theorem}

\newtheorem{remark}[conjecture]{Remark}
    \newtheorem{lemma}[conjecture]{Lemma}
\newtheorem{proposition}[conjecture]{Proposition}
\newtheorem{corollary}[conjecture]{Corollary}

\newtheorem{definition}[conjecture]{Definition}

\newcommand\independent{\protect\mathpalette{\protect\independent}{\perp}} 
\def\independent#1#2{\mathrel{\rlap{$#1#2$}\mkern2mu{#1#2}}}

\newcommand{\R}{\mathbb{R}}

\renewcommand{\P}{\mathbb{P}}

\newcommand{\E}{\mathbb{E}}




\newcommand{\vol}{\mathrm{Vol}}

\newcommand{\N}{\mathbb{N}}

\date{\vspace{-5ex}}

\author{}

\date{}

\begin{document}

\title{Concentration inequalities for log-concave sequences}

\author{Arnaud Marsiglietti and James Melbourne}
\maketitle

\begin{abstract}

We investigate quantitative implications of the notion of log-concavity through a probabilistic interpretation. In particular, we derive concentration inequalities, moment and entropy bounds for random variables satisfying a precise degree of log-concavity. Along the way, we recover, improve, and simplify several results existing in the literature. Our approach is based on majorization in the convex order.

\end{abstract}

\vskip5mm
{\bf Keywords:} Log-concave, Concentration inequality, Majorization, intrinsic volumes, Entropy maximization.

\section{Introduction}
There has been tremendous recent success demonstrating the log-concavity of important combinatorial and geometric sequences \cite{anari2024log, ardila2023lagrangian, backman2023simplicial, berget2023log, branden2020lorentzian,chan2021log, huh2012milnor,huh2015h,huh2012log,huh2022logarithmic,huh2021correlation}, for background see \cite{branden2015unimodality,huh2022combinatorics,stanley1989log}.  The intention of this article is to investigate the quantitative implications of log-concavity through probabilistic interpretation, and in particular to shed light on the level of concentration implied by varying degrees of log-concavity, in the form of tail bounds, moment comparison inequalities, and the identification of maximum entropy distributions.  A key notion will be that of relative log-concavity.

\begin{definition}
    A function $f \colon \mathbb{R} \to [0,\infty)$ is log-concave when
    $$ f((1-t) x + ty) \geq f^{1-t}(x) f^{t}(y) $$ holds for $x,y \in \mathbb{R}$ and $t \in [0,1]$.  Equivalently $f = e^{-V}$ for convex $V \colon \mathbb{R} \to \mathbb{R} \cup \{ \infty \}$. 
\end{definition}

\begin{definition}\label{def: log-concave function on subset}
    A function $g \colon A \to [0,\infty)$ defined on a subset $A \subseteq \mathbb{R}$ is log-concave  when there exists a log-concave function $f \colon \mathbb{R} \to [0,\infty)$ such that
    \[
        f(x) = g(x),
    \]
    for $x \in A$.
\end{definition}

\noindent
Thus a non-negative sequence $x$ on $\mathbb{N}$ is log-concave when it satisfies
\[
    x_k^2 \geq x_{k-1} x_{k+1}
\]
and has ``no internal zeros'', in the sense that $x_n x_m >0$ implies $x_k > 0$ for $n \leq k \leq m$.  We will only consider sequences that satisfy the no internal zeros property and omit mention of this criteria going forward.

\begin{definition}
    A measure $\nu $ is log-concave with respect to $\mu$ if there exists a Radon-Nikodym derivative $\frac{d \nu}{d \mu}$ that is log-concave. For random variables $X \sim \nu$ and $Z \sim \mu$, we write $X \prec_{lc} Z$ when $\nu$ is log-concave with respect to $\mu$.  In this case we say that $X$ is log-concave with respect to $Z$.
\end{definition}

We will identify a non-negative sequence $x$ with measure $\mu \{k \} = x_k$.  When $\sum_{k} x_k = 1$, $\mu$ is  a probability distribution, and we associate a random variable $X$ with $\mathbb{P}(X = k) \coloneqq x_k$.
Our definitions are slightly more abstract than strictly necessary to handle the discrete setting, but they allow a unified approach to sequences and the continuous setting (as we will explore in Section \ref{sec:continuous}).
Let us consider some examples.

\begin{enumerate}
    \item A sequence on $\mathbb{N} = \{0, 1, 2, \dots\}$ is log-concave, exactly when, considered as a measure, it is relatively log-concave with respect to the distribution of a geometric random variable on $\N$.  Recall $\mu \sim$ Geometric$(p)$, when $\mu \{k\} = (1-p)^k p$ for $k \in \mathbb{N}$ and $p \in (0,1)$. To match previous literature, we will consider at times log-concave sequences on $\N \setminus \{0\}$, for which they are relatively log-concave with respect to the distribution of a geometric random variable on $\N \setminus \{0\}$, defined as $\mu \{k\} = (1-p)^{k-1} p$ for $k \geq 1$ and $p \in (0,1)$.
    
    \item Conventionally a sequence $x$ on $\mathbb{N}$ is ultra log-concave (ULC) when it satisfies
    \[
        x_k^2 \geq \left( 1 + \frac 1 k \right) x_{k-1} x_{k+1}.
    \]
    This corresponds to the case that $x$ is relatively log-concave with respect to the distribution $\mu$ of a Poisson($\lambda$) random variable,  $\mu \{k \} = e^{-\lambda} \frac{\lambda^k}{k!}$.

    \item A sequence $x$ on $[n] \coloneqq \{0,1,\dots, n \}$ is ultra log-concave of order $n$ (ULC($n$)) when it satisfies 
    \[
        x_k^2 \geq \left(1 + \frac{1}{n-k} \right)\left( 1 + \frac 1 k \right) x_{k-1} x_{k+1}.
    \]
    This corresponds to relative log-concavity with respect to the distribution $\mu$ of a binomial$(n,p)$, $\mu \{k \} = \binom{n}{k} p^k (1-p)^{n-k}$.

    \item \label{item: intrinsic volume example} When $x$ is the sequence of intrinsic volumes associated to a convex body in $\mathbb{R}^d$, by the Alexandrov-Fenchel inequality $x$ is relatively log-concave with respect to the intrinsic volume sequence $\mu$ of $d$-dimensional Euclidean ball, given by
    $\mu\{k\} = C_{d,\lambda} \binom{d}{k} \Gamma \left( \frac{d}{2} +1 \right) \lambda^k$ for $k \in [d]$ and $C_{d,\lambda}$ a normalizing constant.
\end{enumerate}

In the continuous setting the implications of log-concavity have been thoroughly explored (see, e.g., \cite{saumard2014log}). However, the perspective of relative log-concavity still gives some strengthenings of previous known results through very simple proofs.

\begin{enumerate}
    \item In the continuous setting a function $f \colon (0,\infty) \to [0,\infty)$ is log-concave when it is log-concave with respect to the distribution of an exponential$(\lambda)$ $\sim \lambda e^{-\lambda x}$.

    \item The log-concave random variables of order $p > 0$, introduced by Bobkov in \cite{bobkov2010gaussian} (see also \cite{bobkov2003spectral, bobkov2011concentration, klartag2007central}), coincide exactly with those that are log-concave with respect to a Gamma($p,\beta$) distribution.

\end{enumerate}

Log-concavity is related to and in some ways parallel to the study of total positivity \cite{Ando1987totally, karlin1968total}, or the study of P\'olya frequency sequences.  A P\'olya frequency sequence $x$ of order $r$, which we abbreviate as $PF_r$, is one such that the matrix $(M_{ij}) = (x_{i - j})$ is totally positive of order $r$, in the sense that any $k \times k$ minor has positive determinant for $k \leq r$. The class $PF_2$ corresponds to log-concave sequences.  When a sequence  belongs to $PF_r$ for every $r$ it is called a P\'olya Frequency sequence $(PF)$. We direct the reader to \cite{brenti1989unimodal,brenti1995combinatorics, pitman1997probabilistic} for further background on the appearance and utility of positivity in combinatorics.

\begin{figure}[ht] \label{fig: roadmap}
\caption{Relations between log-concavity notions}
\centering
\includegraphics[width=.8\linewidth]{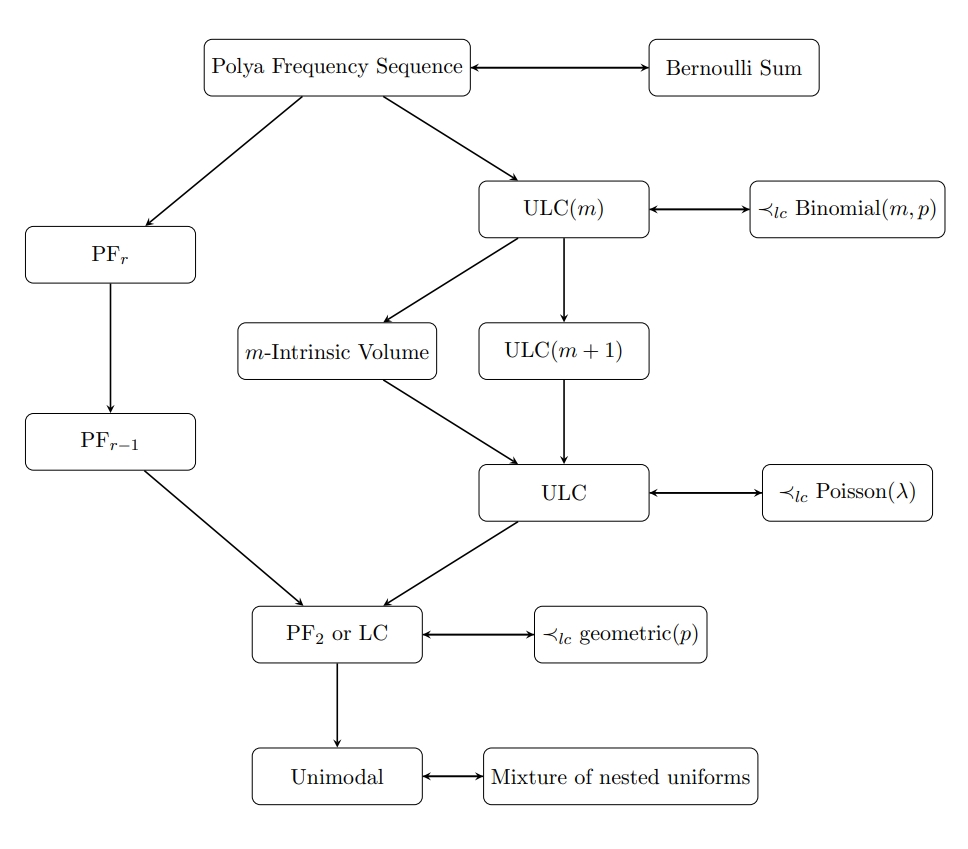}
\end{figure}

 Our work in particular takes inspiration from \cite{pitman1997probabilistic}, where Pitman uses the explicit probabilistic realization (as the distribution of an independent Bernoulli sum) of normalized P\'olya frequency sequences, to derive combinatorial relevant inequalities. The main ingredient is the notion of convex majorization.

\begin{definition}
    A random variable $X$ is majorized by $Z$ in the convex order, written $X \prec_{cx} Z$, when 
    \[
        \mathbb{E}[\varphi(X)] \leq \mathbb{E}[\varphi(Z)]
    \]
    holds for any convex function $\varphi$.
\end{definition}

 Our approach  will be to identify extremal elements in the convex order of a relevant class of variables (those with densities log-concave with respect to a chosen density, typically constrained to satisfy a certain linear constraint), and use direct computations on said extremizers to derive concentration type inequalities. This idea is not new. The observation that relative log-concavity of random variables with matching expectation implies domination in the convex order  was observed explicitly by Whitt \cite{whitt1985uniform}, and is an immediate corollary of (and essentially implicit in) Karlin and Novikoff \cite{karlin1963generalized}.  Moreover, Karlin and Novikoff do not attribute novelty to their ``crossings of density'' arguments in \cite{karlin1963generalized} which they claim are implicit in ``Inequalities'' book of Hardy, Littlewood, and P\'olya originally published in 1934.  For example, see \cite[Exercise 249]{GLP34inequalities}, where a characterization of the convex order is given, that is easily implied from the assumption of a two crossing.

 As a consequence we will deliver many sharp and optimal inequalities for the varying classes of variables. Before detailing these results, let us state that our non-technical message is the following:  Ultra log-concave variables of order $n$ enjoy at least Gaussian concentration phenomena, ultra log-concave variables provide Poisson-type concentration, and intrinsic volumes are subject to concentration at least as strong as the sequences of the Euclidean ball, while log-concave variables give exponential-type concentration.

Through a (to be proven) extension of Whitt \cite{whitt1985uniform} (see Theorem \ref{thm: extension of convex domination theorem}), we will derive new sharp moment comparison results between the expectation and other moments of discrete log-concave random variables.

The article is organized as follows. The background on convex majorization and the proof techniques are presented in Section \ref{background}, where a generalization of Whitt's convex majorization argument is extended to increase the range of application of this technique. Section \ref{sec:lc} demonstrates concentration, moment, and entropy bounds for log-concave probability sequences, and shows that these distributions satisfy exponential-type concentration.  In particular this generalizes and extends the results of Janson \cite{janson2018tail} on the tail bounds of sums of independent geometric random variables. Section \ref{sec: ULC} establishes Poisson-type and Gaussian concentration for ULC and ULC($n$) log-concave probability sequences. For example, thanks to the resolution of the Mason conjecture \cite{anari2024log, branden2020lorentzian}, this gives sharp Gaussian concentration for the size of an independent set in a matroid. In Section \ref{sec: intrinsic}, concentration inequalities for intrinsic volumes are presented, and our approach immediately and easily implies all the results of \cite{aravinda2021concentration} and \cite{lotz2020concentration} as special cases. In particular, we identify the Euclidean ball as a maximizer of the ``intrinsic entropy'' for fixed ``central intrinsic volume'', correcting the claim of \cite{lotz2020concentration} that the cube enjoyed such a distinction. Section \ref{sec:continuous} presents similar results for the ``continuous'' setting, where we extend Janson's results on sums of independent exponential random variables \cite{janson2018tail}, as well as Bobkov's concentration results \cite{bobkov2010gaussian} for distributions that are log-concave of order $p$.

\section{Background on Convex Majorization}\label{background}

\subsection{Majorization of Log-Concave Distributions} \label{sec: Majorization of logconcave}

For subsets $A,B \subseteq \mathbb{R}$ we introduce the notation $A \prec B$ when $\sup A \leq \inf B$.

\begin{definition}\label{minimal-part}
    For a set $S \subseteq \mathbb{R}$, a function $F \colon S \to \mathbb{R}$ has no more than $k$-zero crossings if there exists a {\it sign preserving ordered partition} of $S$ of size $k+1$, that is, $\{S_i \}_{i=0}^k$, with $S_0 \prec \cdots \prec S_k$ and $\bigcup_{i=0}^n S_i = S$, such that 
    \begin{align} \label{eq: partition equation}
          F(x) F(y) \geq 0, \ \ \hbox{ for $x, y \in S_i$.}
    \end{align}
    If $F$ has no more than $k$-zero crossings for some finite $k$, we say that it has $n$-zero crossings if $n$ is minimum over all $k$, such that $F$ has no more than $k$-zero crossings. In this case, $\{S_i \}_{i=0}^n$ satisfying \eqref{eq: partition equation} is called a minimal partition.
\end{definition}

The following theorem describes a simple sufficient condition for convex majorization. Intuitively, it says that majorization holds for random variables of matched mean, when their density functions cross twice.

\begin{theorem} \label{thm: Karlin type theorem basic}
    Let $\mu$ be a measure on $[0, +\infty)$. Suppose that $f$ and $g$ are Borel measurable functions on $[0, +\infty)$ such that
    \begin{align*}
        \int_0^\infty f(x) d\mu(x) = \int_0^\infty g(x) d\mu(x) 
    \end{align*}
    and 
    \begin{align*}
        \int_0^\infty x f(x) d\mu(x) = \int_0^\infty x g(x) d\mu(x)
    \end{align*}
    and the integrals are finite. If there exists an interval $I \coloneqq [a,b] \subseteq (0,\infty)$ such that $g(y) \leq f(y)$ for $y \in I$ while $g(y) \geq f(y)$ for $y \notin I$, then $\varphi \colon (0,\infty) \to \mathbb{R}$ convex implies,
    \begin{align*}
        \int_0^\infty \varphi(x) f(x) d\mu(x) \leq \int_0^\infty \varphi(x) g(x) d\mu(x).
    \end{align*}
\end{theorem}

\begin{proof}
    Taking $\tilde{\varphi}(x) = \varphi(x) - (mx+k)$ with $m,k$ chosen such that $\tilde{\varphi}(a) = \tilde{\varphi}(b) = 0$ (explicitly $m \coloneqq \frac{\varphi(b)- \varphi(a)}{b-a}$ and $k \coloneqq \frac{\varphi(a)b - \varphi(b)a}{b-a}$), then
    \[
        \int [g -f] \varphi \ d\mu = \int [g-f] \tilde{\varphi} \ d\mu \geq 0.
    \]
    The above equality holds since $\tilde{\varphi} - \varphi$ is an affine function and hence $f d\mu$ and $g d\mu$ have identical integrals.  The inequality follows since $(g-f) \tilde{\varphi}$ is a non-negative function, by assumptions on $f$ and $g$, and by construction.
\end{proof}

The next theorem describes a large class of random variables satisfying convex majorization.

\begin{theorem}\label{Major-log}

Let $\mu$ be a measure on $[0,+\infty)$. For $X$ a non-negative random variable that is log-concave with respect to $\mu$, and $Z$ a non-negative random variable that is log-affine with respect to $\mu$ on the entire support of $\mu$, and satisfying $\mathbb{E}[Z] = \mathbb{E}[X]$, we have
\begin{align*}
    X \prec_{cx} Z.
\end{align*}

\end{theorem}

\begin{proof}
    We will show that the probability density function of $X$ and $Z$ with respect to $\mu$ have exactly 2 crossings and apply Theorem \ref{thm: Karlin type theorem basic}. Let us denote by $f$ the p.d.f. of $X$ and by $a$ the p.d.f. of $Z$. Since $f$ is log-concave, and $a$ is log-affine, $f$ and $a$ have no more than two crossings.
    
    If $f = a$, there is nothing to prove, so let us assume that there exists $x$ such that $f(x) \neq a(x)$. In this case, we claim $f$ and $a$ must have exactly two crossings.  If there are no crossings then we have $f(x) \leq a(x)$ for all $x$ or $f(x) \geq a(x)$ for all $x$ with strict inequality for some $y$.  In either case, this contradicts $\int f(x) d\mu(x) = \int a(x) d\mu(x) = 1$.
    
    To have exactly one crossing, would contradict $\mathbb{E}[X] = \mathbb{E}[Z]$.  Indeed, say $a(x) \geq f(x)$ for $x \leq x_1$ and $a(x) \leq f(x)$ for $x > x_1$. But this would imply $\mathbb{P}(Z > t) \leq \mathbb{P}(X > t)$ for all $t >0$, with a strict inequality for some $t$ (else $X$ and $Z$ would be the same distribution) and hence
    \begin{align*}
        \mathbb{E}[Z] = \int_0^\infty \mathbb{P}(Z > t) dt  < \int_0^\infty \mathbb{P}(X > t ) dt = \mathbb{E}[X] 
    \end{align*}
    gives the contradiction. Thus applying Theorem \ref{thm: Karlin type theorem basic}, we obtain $X \prec_{cx} Z$.
\end{proof}

If $\mu$ and $\nu$ are measures on a measurable space $(E,\mathcal{F})$, we consider the pushforward measures $T_*\mu$ and $T_*\nu$ as measures on the measurable space induced by $T$, $(T(E), T(\mathcal{F}))$, where the $\sigma$-algebra $T(\mathcal{F})$ is defined by $A \subseteq T(E)$ belongs to $T(\mathcal{F})$ if and only if $T^{-1}(A) \in \mathcal{F}$. 

\begin{theorem} \label{thm: density pushforward}
Let $\nu$ and $\mu$ be measures on $\mathbb{R}$ such that $\frac{d\nu}{d\mu}$ exists and let $T$ be a non-decreasing function. Then, on $T(supp(\mu))$, $T_* \nu$ has a density with respect to $T_*\mu$ which can be expressed by
\begin{align*}
    \frac{dT_*\nu}{dT_*\mu}(y) := f^{*}(y) = \begin{cases}
        \frac{\nu(T^{-1} (\{y \}))}{\mu(T^{-1} (\{y \}))}  &\text{ for } \# \{ T^{-1} (\{y \})\} > 1
            \\
    \frac{d \nu}{ d\mu} (x) &\text{ for } \# \{ T^{-1} (\{y\}) \} = 1,
        \end{cases}
\end{align*}
where $ x$ is the unique point such that $T(x) = y$ and we use the convention that $\frac{0}{0} \coloneqq 0.$
\end{theorem}

\begin{proof}
Since $T$ is non-decreasing, we have that for $y \in T(\R)$, $T^{-1}(\{y\}) = \{x \in \R : T(x) = y\}$ is an interval (possibly reduced to a singleton). Therefore, there are only countably many $y$'s such that $\# T^{-1}(\{y\}) > 1$.
Let us enumerate such $y$'s as $\{y_i\}_{i \in I}$, for some index set $I \subseteq \N$, and let us denote $K = \{y \in T(\R) : \# T^{-1}(\{y\}) = 1\}$. Note that for all $i \in I$,
\begin{eqnarray*}
T_*\nu(\{y_i\}) = \nu(\{T^{-1}(\{y_i\})\}) & = & \frac{\nu(\{T^{-1}(\{y_i\})\})}{\mu(\{T^{-1}(\{y_i\})\})} \mu(\{T^{-1}(\{y_i\})\}) \\ & = & f^*(y_i) \mu(\{T^{-1}(\{y_i\})\}) \\ & = & \int_{\{y_i\}} f^* dT_*\mu.
\end{eqnarray*}
Therefore, for any Borel set $A \subset \R$,
$$ T_*\nu(A \cap K^c) = \sum_{i \in I : y_i \in A} T_*\nu(\{y_i\}) = \sum_{i \in I : y_i \in A} \int_{\{y_i\}} f^* dT_*\mu = \int_{A \cap K^c} f^* dT_*\mu. $$
On the other hand, since $T^{-1}$ defines an injective map on $K$, one may write for $y \in K$, $T^{-1}(\{y\}) = T^{-1}(y) \in \R$, so that if $x \in T^{-1}(K)$, we have $T^{-1}(T(x)) = x$, and thus
$$ \frac{d \nu}{d \mu}(x) = \frac{d \nu}{d \mu}(T^{-1}(T(x))) = f^*(T(x)). $$
Therefore, for any Borel set $A \subset \R$,
\begin{eqnarray*}
T_*\nu(A \cap K) = \nu(T^{-1}(A \cap K)) & = & \int 1_{A \cap K}(T(x)) \frac{d \nu}{d \mu}(x) d\mu(x) \\ & = & \int 1_{A \cap K}(T(x)) f^*(T(x)) d\mu(x) \\ & = & \int_{A \cap K} f^* dT_*\mu.
\end{eqnarray*}
We conclude by writing
$$ T_*\nu(A) = T_*\nu(A \cap K) + T_*\nu(A \cap K^c) = \int_{A \cap K} f^* dT_*\mu + \int_{A \cap K^c} f^* dT_*\mu = \int_{A} f^* dT_*\mu. $$
\end{proof}

\begin{theorem}\label{cross-increase}
    Suppose that $\nu$ and $\gamma$ have densities $f$ and $g$ on a set $S \subseteq \mathbb{R}$ with respect to $\mu$, and that $f - g$ has no more than $n$-zero crossings, and that $T$ is a non-decreasing function. Then $T_* \nu$ and $T_* \gamma$ have densities $f^*$ and $g^*$ on $T(S)$ with respect to $T_*\mu$, and $f^* - g^*$ has no more than $n$-zero crossings.
\end{theorem}

\begin{proof}
    Using the densities $f^*$ and $g^*$ supplied by Theorem \ref{thm: density pushforward}, 
    \begin{align} \label{eq: pushforward density}
        F^*(y) \coloneqq f^* - g^* (y) =  \begin{cases}
        \frac{\nu(T^{-1} (\{y \})) - \gamma(T^{-1} (\{y \}))}{\mu(T^{-1} (\{y \}))}  &\text{ for } \# \{ T^{-1} (\{y \}) \} > 1
            \\
    (f-g) (x) &\text{ for } \# \{ T^{-1} (\{y\}) \} = 1
    \end{cases},
    \end{align}
    where again, $x$ is the unique point such that $T(x) = y$.
Let $\{S_i\}_{i=0}^n$ denote a sign preserving ordered partition of $S$ with respect to $F:=f-g$, see Definition \ref{minimal-part}, so that for all $x,y \in S_i$, $F(x)F(y) \geq 0$.  Define a partition of $T(S)$ inductively by $J_0 = T(S_0)$,
and for $1 \leq i \leq n$,
\begin{align*}
    J_i = T(S_i) \setminus \bigcup_{k=0}^{i-1} T \left( S_k \right).
\end{align*}
The $J_i$ are by definition a partition of $T(S)$ and satisfy $J_i \prec J_{i+1}$ by the monotonicity of $T$.  It remains to check that the partition respects the changes of sign.  If $y_1, y_2 \in J_i$ and $\# \{T^{-1} (\{y_k\}) \} = 1$, then there exist unique $x_k \in S_i$ such that $T(x_k) = y_k$.  Thus,
\[
    F^*(y_1) F^*(y_2) = F(x_1)F(x_2) \geq 0.
\]
If only one of the $y_k$, say $y_1$, satisfies $\#\{T^{-1}(\{y_1\})\} = 1$ then
\[
    F^*(y_1) F^*(y_2) = \frac{\int_{T^{-1}(\{y_2\})} F(x_1)F(x) d\mu(x)}{\mu(T^{-1}(\{y_2\}))} \geq 0,
\]
since $x, x_1 \in S_i$.  Finally if neither $y_k$ is the image of a unique point,
\[
    F^*(y_1) F^*(y_2) = \frac{\int_{T^{-1}(\{y_1\})} \int_{T^{-1}(\{y_2\})} F(x) F(x') \, d\mu(x) d\mu(x')}{\mu(T^{-1}(\{y_1\})) \mu(T^{-1}(\{y_2\}))} \geq 0,
\]
since $x, x' \in S_i$.
\end{proof}

The following generalization of Theorem \ref{Major-log} is needed in some applications, such as moments comparison (see Section \ref{sec: moment bounds}).

\begin{theorem}\label{thm: extension of convex domination theorem}

Let $\mu$ be a measure on $[0,+\infty)$. Suppose that $X$ is log-concave with respect to $\mu$ and that $Z$ is log-affine with respect to $\mu$ on the entirety of the support of $\mu$. If $T$ is a non-decreasing function such that $\mathbb{E}[T(X)] = \mathbb{E}[T(Z)]$, then
$$ T(X) \prec_{cx} T(Z). $$

\end{theorem}

\begin{proof}
    If $X$ is log-concave and $Z$ is log-affine with respect to a reference measure $\mu$, then their densities have at most two crossings. Since $T$ is non-decreasing, the densities of $T(X)$ and $T(Z)$ have at most two crossings by Theorem \ref{cross-increase}. We can then repeat the proof of Theorem \ref{Major-log}. If $T(X) = T(Z)$ the proof is trivial, hence we may assume there is at least one crossing of $T(X)$ and $T(Z)$, and since exactly one crossing would again contradict $\mathbb{E}[T(X)] = \mathbb{E}[T(Z)]$ the proof is complete.
\end{proof}

\subsection{Quantitative implications}\label{sec: applications}

\subsubsection{Concentration inequalities}\label{sec: concentration}

The first application of convex majorization is used toward deriving concentration inequalities. Let $X$ be a log-concave random variable with respect to a reference measure $\mu$. According to Theorem \ref{Major-log}, if $Z$ is $\mu$-log-affine supported on the whole $\{\mu > 0\}$ such that $\E[Z] = \E[X]$, then $X \prec_{cx} Z$. As the result, for all convex function $\varphi$,
\begin{equation}\label{convex}
\E[\varphi(X)] \leq \E[\varphi(Z)].
\end{equation}

The following result demonstrates that Chernoff-type tail bounds on a random variable can be transferred to a random variable it majorizes.  
To this end we define for a random variable $X$, 
\begin{align*}
    \Lambda_X(\lambda) \coloneqq \log \mathbb{E}[ e^{\lambda X}].
\end{align*}
For a real-valued function $f$ defined on an interval $I$, we denote by $f^*$ the Legendre transform, 
\begin{align}
    f^*(t) = \sup_{\lambda \in I} \{ \lambda t - f(\lambda) \},
\end{align}
defined on $I^*$ the set of $t$ such that the supremum is finite.

\begin{theorem} \label{thm: Chernoff majorization}
For $X \prec_{cx} Z$, 
\begin{align*}
    \mathbb{P}( X \geq t) \leq \exp [ \ - \Lambda_+^*(t)], \hspace{8mm} \mathbb{P}( X \leq t) \leq \exp [ \ - {\Lambda}_-^*(t)],
\end{align*}
where $\Lambda_+$ is the function $\Lambda_Z(\lambda) = \log \mathbb{E}[e^{\lambda Z}]$ restricted to $\lambda >0$ while $\Lambda_-$ is the restriction of $\Lambda_Z$ to $\lambda < 0$.
\end{theorem}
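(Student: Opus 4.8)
The plan is to run the classical Chernoff argument, using the convex majorization $X \prec_{cx} Z$ only once, to transfer control of the moment generating function from $Z$ to $X$. The single structural input I need is that the exponential $x \mapsto e^{\lambda x}$ is convex for every real $\lambda$. Applying the definition of $\prec_{cx}$ with $\varphi(x) = e^{\lambda x}$ immediately gives
\begin{align*}
    \Lambda_X(\lambda) = \log \mathbb{E}[e^{\lambda X}] \leq \log \mathbb{E}[e^{\lambda Z}] = \Lambda_Z(\lambda)
\end{align*}
for all $\lambda \in \mathbb{R}$. This is the only place majorization enters, and it is valid regardless of the sign of $\lambda$.

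For the upper tail, I would fix $\lambda > 0$ and use Markov's inequality on the nonnegative random variable $e^{\lambda X}$: since $\{X \geq t\} = \{e^{\lambda X} \geq e^{\lambda t}\}$ when $\lambda > 0$,
\begin{align*}
    \mathbb{P}(X \geq t) \leq e^{-\lambda t}\, \mathbb{E}[e^{\lambda X}] = e^{-(\lambda t - \Lambda_X(\lambda))} \leq e^{-(\lambda t - \Lambda_Z(\lambda))}.
\end{align*}
Because this holds for every $\lambda > 0$, I may take the infimum of the right-hand side over $\lambda > 0$, equivalently the supremum of $\lambda t - \Lambda_Z(\lambda)$ over $\lambda > 0$, which by the definition of the Legendre transform restricted to the positive half-line is exactly $\Lambda_+^*(t)$. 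This yields $\mathbb{P}(X \geq t) \leq \exp[-\Lambda_+^*(t)]$. The restriction to $\lambda > 0$ is forced here: it is precisely the sign condition under which the event rewriting and Markov step preserve the inequality direction, and it matches the domain on which $\Lambda_+$ is defined.

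The lower tail is the mirror image, handled by taking $\lambda < 0$. Then $\{X \leq t\} = \{e^{\lambda X} \geq e^{\lambda t}\}$, and the same Markov estimate combined with $\Lambda_X(\lambda) \leq \Lambda_Z(\lambda)$ gives $\mathbb{P}(X \leq t) \leq e^{-(\lambda t - \Lambda_Z(\lambda))}$ for every $\lambda < 0$; optimizing over $\lambda < 0$ produces $\Lambda_-^*(t)$ and hence $\mathbb{P}(X \leq t) \leq \exp[-\Lambda_-^*(t)]$.

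I do not anticipate a genuine obstacle, as the computation is routine once the moment generating function comparison is in hand; the only points requiring minor care are keeping the sign constraint on $\lambda$ consistent with each tail and noting that the bound is trivially true (the right-hand side being $1$ or larger) when the relevant supremum is nonpositive, so no integrability or finiteness hypothesis beyond those implicit in $\prec_{cx}$ is needed.
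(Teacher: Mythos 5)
Your proposal is correct and follows essentially the same route as the paper: convexity of $x \mapsto e^{\lambda x}$ gives $\Lambda_X \leq \Lambda_Z$ from $X \prec_{cx} Z$, and the standard Markov/Chernoff optimization over $\lambda > 0$ (resp. $\lambda < 0$) yields the two tail bounds. Your write-up is in fact slightly more explicit than the paper's about the sign constraints and the trivial case where the supremum is nonpositive.
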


\begin{proof}
    For $\lambda > 0$, the standard approach through Markov's inequality gives
    \begin{align*}
        \mathbb{P}( X \geq t)
            =
                \mathbb{P}( e^{\lambda X} \geq e^{\lambda t})
            \leq 
                e^{ \Lambda_X(\lambda) - \lambda t}.
    \end{align*}
     Since $\Lambda_X(t) \leq \Lambda_Z(t)$ follows from $X \prec_{cx} Z$, taking the infimum over $\lambda >0$ yields the result.  Similarly, for $\lambda < 0$,
    \begin{align*}
        \mathbb{P}( X \leq t)
            =
                \mathbb{P}( e^{\lambda X} \geq e^{\lambda t})
            \leq 
                e^{ \Lambda_X(\lambda) - \lambda t}.
    \end{align*}
   Taking the infimum over $\lambda <0$ completes the proof. 
\end{proof}

The above method is sometimes referred to as the Cram\'er-Chernoff method in the literature (see, e.g., \cite{boucheron}).



\subsubsection{Moment Bounds} \label{sec: moment bounds}

The next application of majorization is used to provide comparisons between moments.

\begin{theorem}\label{mom-compar}
Let $0 < \alpha < \beta$. For a random variable $X$ log-concave with respect to $Z$, and satisfying $\mathbb{E}[Z^{\alpha}] = \mathbb{E}[X^{\alpha}]$, we have
\begin{align}
    \mathbb{E}[X^{\beta}]^{\frac 1 \beta} \leq A_{\alpha,\beta} \mathbb{E}[X^{\alpha}]^{\frac 1 \alpha}
\end{align}
where
\begin{align*}
    A_{\alpha,\beta} = \frac{\mathbb{E}[Z^{\beta}]^{\frac 1 \beta}}{\mathbb{E}[Z^{\alpha}]^{\frac 1 \alpha}}.
\end{align*}
\end{theorem}

\begin{proof}
    The result is obtained by taking $T(x) = x^{\alpha}$, the convex function $\varphi(x) = x^{\frac \beta \alpha}$, and $\mu$ the distribution of $Z$ in Theorem \ref{thm: extension of convex domination theorem}.
\end{proof}

\subsubsection{Maximum Entropy Distributions} \label{sec: Maximum entropy}

Lastly, convex majorization can be used to provide entropy bounds. Let us recall the definition of the R\'enyi entropy.
\begin{definition}[R\'enyi Entropy]
    For a random variable $X$ with probability mass function $x_k := \mathbb{P}(X = k)$, and $\alpha \in (0,1) \cup (1,\infty)$,
    \[
        H_\alpha(X) \coloneqq \frac{1}{1-\alpha} \log \left( \sum_k x_k^\alpha \right).
    \]
    Further, $H_0(X) \coloneqq \log |\{x_n >0\}|$ with $| \cdot |$ denoting cardinality, $H_1(X) = H(X)$ is the usual Shannon entropy  $ -\sum_k x_k \log x_k,$ and $H_\infty(X) = - \log \|x \|_\infty$, with $\|x\|_\infty \coloneqq \max_k x_k$.  When $X$ is a continuous random variable with density function $f$ with respect to Lebesgue measure, we write 
    $$
        h_\alpha(X) \coloneqq \frac{1}{1-\alpha} \log \left( \int f^\alpha(x) dx \right),
    $$
    with $h_0(X) \coloneqq \log |\{f >0\}|$ where $| \cdot |$ denotes volume, $h_1(X) = h(X) = -\int f(x) \log f(x)dx$, and $h_\infty(X) = - \log \|f \|_\infty$, with $\|f\|_\infty$ denoting the essential supremum of $f$ with respect to Lebesgue measure.
\end{definition}

An application of Theorem \ref{Major-log} together with the following lemma easily yield R\'enyi entropy maximization within subclasses of log-concave distributions.

\begin{lemma}\label{entropy}

Let $X \sim f$, $Z \sim g$ be random variables where $f,g$ are densities with respect to the geometric distribution in the discrete case, or with respect to the exponential distribution in the continuous case. In order to prove $H_{\alpha}(X) \leq H_{\alpha}(Z)$ or $h_{\alpha}(X) \leq h_{\alpha}(Z)$, it suffices to prove
\begin{eqnarray*}
\E[g^{\alpha-1}(X)] & \leq & \E[g^{\alpha-1}(Z)], \quad {\text{ if }} \, \alpha \in (0,1), \\
-\E[\log(g(X))] & \leq & -\E[\log(g(Z))], \quad {\text{ if }} \, \alpha = 1,
    \\
\E[g^{\alpha-1}(X)] & \geq & \E[g^{\alpha-1}(Z)], \quad {\text{ if }} \, \alpha \in (1,\infty).
\end{eqnarray*}

\end{lemma}

The proof is an application of H\"older's inequality, and the non-negativity of the relative entropy. A proof can be found in detail in \cite[Lemma 3.25]{MMX17:1}.

\begin{theorem} \label{thm: maximum entropy dist}
    Let $\alpha \leq 1$. Let $Z$ be a discrete or continuous log-concave random variable. If $X$ is such that $X \prec_{lc} Z$ and $\mathbb{E}[X] = \mathbb{E}[Z]$, then
    \begin{align*}
        H_\alpha(X) \leq H_\alpha(Z) \quad \mbox{or} \quad  h_\alpha(X) \leq h_\alpha(Z).
    \end{align*}
\end{theorem}

\begin{proof}
In the discrete setting, denote by $g$ the probability mass function of $Z$. Consider $\widetilde{g}$ the piecewise linear extension of $g$ so that $\widetilde{g}$ is a log-concave function on $[0,+\infty)$ (see, e.g., \cite{BMM20} Proposition 5.1). Therefore the functions $\varphi(x) = \widetilde{g}^{\alpha-1}(x)$ when $\alpha \in (0,1)$ and $\psi(x) = -\log(\widetilde{g}(x))$ are convex. It remains to apply Theorem \ref{Major-log} together with Lemma \ref{entropy}. The proof in the continuous setting is similar and more straightforward.
\end{proof}

As a consequence, with $\alpha \leq 1$, the geometric distribution and exponential distribution have maximum $\alpha$-entropy among non-negative discrete and continuous log-concave distributions with fixed expectation, extending the classical fact that the geometric and exponential distributions have maximum Shannon entropy among all positive distributions of fixed expectation. Further the Poisson distribution has maximum $\alpha$-entropy for fixed expectation extending \cite{Yu09:2}, and the binomial$(p,n)$ has maximum $\alpha$-entropy among ULC$(n)$ variables with expectation $pn$, extending the result in the Shannon case given by Yu \cite{yu2008maximum} proven through ``thinning techniques'', which extended Harremo\"es \cite{Har01} who had proven the same result for the subset of ULC$(n)$ consisting of independent Bernoulli sums of length $n$.

\section{Log-concave Sequences}\label{sec:lc}

As discussed in the introduction, log-concavity of a non-negative sequence $\{x_k\}$ can be written as the inequality 
\begin{equation} \label{eq: vanilla log-concave sequence}
    x_k^2 \geq x_{k-1} x_{k+1}
\end{equation}
with the additional assumption that the sequence has no internal zeros, in the sense that $x_n x_m > 0$ implies $x_k > 0$ for $ n < k < m$.  Constructing $f = e^{- V}$ with $V$  the linear interpolation of the points $(k,- \log x_k)$ yields $V$ convex and hence $x$ being log-concave as a sequence agrees with that of Defintion \ref{def: log-concave function on subset} (see, e.g., \cite[Proposition 5.1]{BMM20}). A useful and thematic observation is that for a non-negative integer-valued random variable $X$, the sequence $x_k \coloneqq \mathbb{P}(X = k)$ is log-concave if and only if $X \prec_{lc} Z$ where $Z$ is a geometric random variable (on the appropriate support). Typically we will consider a sequence to be defined on $\mathbb{N} = \{0,1,2, \dots \}$ or $\mathbb{N} \setminus \{0 \}$, however in the context of the translation invariant inequality \eqref{eq: vanilla log-concave sequence}, other subspaces of the integers can be used without complication depending on the application.

In the next theorem, to match previous literature, we consider log-concave random variables on $\mathbb{N} \setminus \{0\}$. Recall that $Z$ is  geometric$(p)$ on $\mathbb{N} \setminus \{0\}$ if there exists $p \in (0,1)$ such that $\mathbb{P}(Z = k ) = (1-p)^{k-1} p$ for $k \geq 1$. Our first main result extends the tail bounds for sums of independent geometric random variables due to Janson \cite{janson2018tail}, to sums of positive log-concave random variables.

\begin{theorem}\label{thm: conc-geom-sum}

Let $X_1, \dots, X_n$, $n \geq 1$, be independent discrete log-concave random variables on $\N \setminus \{0\}$. Denote $S_n = \sum_{i=1}^n X_i$. Then, for all $t \geq 1$,
$$ \P(S_n \geq t \E[S_n]) \leq e^{-\frac{\E[S_n]}{\max_i \E[X_i]}(t-1-\log(t))}, $$
and for all $t \leq 1$,
$$ \P(S_n \leq t \E[S_n]) \leq e^{-\frac{\E[S_n]}{\max_i \E[X_i]}(t-1-\log(t))}. $$

\end{theorem}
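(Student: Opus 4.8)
The plan is to treat the geometric law as the convex-order-maximal discrete log-concave distribution of a prescribed mean on $\N \setminus \{0\}$, to use majorization to replace each $X_i$ by a geometric of the same mean, and to finish with the Chernoff method; the resulting geometric tail estimate is exactly the one of Janson. Set $\mu_i := \E[X_i]$ and $M := \E[S_n] = \sum_{i=1}^n \mu_i$. Since $X_i \ge 1$ almost surely we have $\mu_i \ge 1$, so for each $i$ there is a geometric variable $G_i$ on $\N \setminus \{0\}$, with $\P(G_i = k) = (1-p_i)^{k-1} p_i$ and $p_i = 1/\mu_i$, satisfying $\E[G_i] = \mu_i$. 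Take $G_1, \dots, G_n$ independent, write $T_n := \sum_{i=1}^n G_i$, and note $\E[T_n] = M$ and $\max_i \E[G_i] = \max_i \mu_i =: \mu_*$.

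First I would verify that $X_i \prec_{lc} G_i$. The ratio $k \mapsto \P(X_i = k)/\P(G_i = k)$ equals the log-concave sequence $\P(X_i = k)$ multiplied by the log-affine factor $(1-p_i)^{-(k-1)}/p_i$, hence is again a log-concave sequence; this is precisely the statement that log-concavity with respect to a geometric reduces to ordinary log-concavity, as recorded after the third definition. Because $\E[X_i] = \E[G_i]$, the observation from the introduction (matching means together with $\prec_{lc}$ force $\prec_{cx}$) gives $X_i \prec_{cx} G_i$. Testing this convex order against the convex function $x \mapsto e^{\theta x}$ yields $\E[e^{\theta X_i}] \le \E[e^{\theta G_i}]$ for every real $\theta$ for which the right-hand side is finite (which includes a neighborhood of the origin, the laws being sub-exponential by log-concavity). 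Multiplying these termwise nonnegative bounds and using independence gives
\[
  \E\!\left[e^{\theta S_n}\right] = \prod_{i=1}^n \E\!\left[e^{\theta X_i}\right] \le \prod_{i=1}^n \E\!\left[e^{\theta G_i}\right] = \E\!\left[e^{\theta T_n}\right];
\]
observe that only the per-coordinate comparison and independence are needed here, not stability of the convex order under convolution.

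With the moment generating function of $S_n$ dominated by that of $T_n$, both tails follow by the Chernoff method. For $t \ge 1$ and $\theta > 0$, Markov's inequality and the display give $\P(S_n \ge tM) \le e^{-\theta t M}\,\E[e^{\theta T_n}]$; taking the infimum over $\theta > 0$ reduces the upper tail of $S_n$ to the upper tail of the geometric sum $T_n$. For $t \le 1$, the event $\{S_n \le tM\}$ coincides with $\{e^{\theta S_n} \ge e^{\theta t M}\}$ when $\theta < 0$, and the same two inequalities reduce the lower tail to that of $T_n$. By Janson's tail bounds for sums of independent geometric random variables \cite{janson2018tail}, in either regime (with $\theta$ ranging over the relevant sign)
\[
  \inf_{\theta}\, e^{-\theta t M}\,\E\!\left[e^{\theta T_n}\right] \le \exp\!\left(-\frac{M}{\mu_*}\,(t - 1 - \log t)\right),
\]
which is the claimed bound since $M = \E[S_n]$ and $\mu_* = \max_i \E[X_i]$.

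The step I expect to be the crux is this last optimization for $T_n$: although the $G_i$ carry different parameters, the exponent must collapse onto the single largest mean $\mu_*$. The mechanism — supplied by Janson, and which I would reproduce if a self-contained treatment were wanted — is a convexity-in-the-parameter estimate of the form $\log \E[e^{\theta G_i}] \le (\mu_i/\mu_*)\,\log \E[e^{\theta G_*}]$, where $G_*$ has mean $\mu_*$; summing over $i$ turns the product $\prod_i \E[e^{\theta G_i}]$ into $\E[e^{\theta G_*}]^{M/\mu_*}$, after which a one-parameter optimization produces the rate $t - 1 - \log t$. The continuous analogue makes the scalar computation transparent: for a single exponential the optimal exponent is $\inf_{s \in (0,1)}\{-st - \log(1-s)\} = -(t - 1 - \log t)$, attained at $s = 1 - 1/t$. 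The genuinely new ingredient is therefore not the tail estimate itself but the majorization of the second paragraph, which is what lets an arbitrary independent family of discrete log-concave summands inherit the geometric bound.
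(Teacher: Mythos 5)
Your proposal is correct and follows essentially the same route as the paper: majorize each $X_i$ in the convex order by a geometric of matching mean (the paper invokes Theorem \ref{Major-log} where you argue via the log-affine ratio, but the mechanism is identical), dominate the moment generating function of $S_n$ factorwise, and close with the Chernoff bound. The ``crux'' you defer to Janson --- collapsing the product of geometric moment generating functions onto the single worst parameter via $\log \E[e^{\theta G_i}] \leq (\mu_i/\mu_*) \log \E[e^{\theta G_*}]$ --- is exactly the paper's Lemma \ref{lem: moment generating function bound}, followed by the same optimization $\theta = \lambda(1 - 1/t)$.
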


Taking $X_i$ to be geometric random variables on $\mathbb{N} \setminus \{0\}$ recovers \cite[Theorem 2.1]{janson2018tail} and \cite[Theorem 3.1]{janson2018tail}.


To establish Theorem \ref{thm: conc-geom-sum}, we will have use for the following elementary lemma. Recall that an exponential$(p)$ random variable has density function $f(x) = p e^{-px}$ for $x \geq 0$ and $p > 0$.

\begin{lemma}\label{lem: moment generating function bound}
    For $G$ a geometric$(p)$ random variable on $\N \setminus \{0\}$ and $W$ an exponential$(p)$ random variable, and $\theta< \lambda \leq p$,
    \begin{align*}
        \mathbb{E}[e^{\theta G}] \leq \mathbb{E}[e^{\theta W}] = \frac{p}{p - \theta} \leq e^{- \frac \lambda p \log \left[ 1 - \frac \theta \lambda \right]}.
    \end{align*}
\end{lemma}

\begin{proof}
    When $W$ is exponential$(p)$,
    \begin{align*}
        \mathbb{E}[e^{\theta W}] = \frac{p}{p-\theta } = e^{- \log \left[ 1- \frac \theta p\right]} \leq e^{- \frac \lambda p \log \left[ 1 - \frac \theta \lambda \right]}.
    \end{align*}
    When $G$ is geometric$(p)$,
    \begin{align*}
        \mathbb{E}[e^{\theta G}] = \frac{p}{p - (1-e^{-\theta })} \leq \frac{ p}{p-\theta } = \mathbb{E}[e^{\theta W}],
    \end{align*}
    which gives the result.
\end{proof}

\begin{proof}[Proof of Theorem \ref{thm: conc-geom-sum}]
Let $Z_i$, $i = 1, \dots, n$, be independent geometric distributions on $\N \setminus \{0\}$ with parameter $p_i = \frac{1}{\E[X_i]}$, chosen so that $\mathbb{E}[X_i] = \mathbb{E}[Z_i]$. Thus, by Theorem \ref{Major-log}, $X_i \prec_{cx} Z_i$. Hence, the result follows from estimates of the moment generating function of $\sum_{i=1}^n Z_i$.

For all $0 < \theta < \lambda \coloneqq \min_{i} p_i = (\max_i \mathbb{E}[X_i])^{-1}$, applying the product structure of the moment generating function, $X_i \prec_{cx} Z_i$ applied to the convex function $x \mapsto e^{\theta x}$, and then Lemma \ref{lem:  moment generating function bound}, 
\begin{eqnarray*} 
    \E[e^{\theta S_n}] 
        \leq 
            \Pi_{i=1}^n \E[e^{\theta Z_i}] 
        \leq 
            e^{- \sum_{i=1}^n \frac{\lambda}{p_i} \log \left(1 - \frac{\theta}{\lambda} \right)}     
        = 
            e^{- \lambda \E[S_n] \log \left(1 - \frac{\theta}{\lambda} \right)}.
\end{eqnarray*}
Therefore, by Markov's inequality
$$ \P(S_n \geq t \E[S_n]) \leq e^{-\theta t \E[S_n] - \lambda \E[S_n] \log(1 - \frac{\theta}{\lambda})}. $$
Taking $\theta = \lambda (1 - \frac{1}{t})$ yields the result for the large deviation bound. A similar argument applied to negative $\theta$ yields the small deviation bound.
\end{proof}

Taking $n=1$ in Theorem \ref{thm: conc-geom-sum} gives the following corollary.

\begin{corollary}\label{conc-geom}

Let $X$ be a discrete log-concave random variable on $\N \setminus \{0\}$. Then, for all $t \geq 1$,
$$ \P(X \geq t \E[X]) \leq t e^{1-t}, $$
and for all $t \leq 1$,
$$ \P(X \leq t \E[X]) \leq te^{1-t}. $$

\end{corollary}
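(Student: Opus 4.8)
The plan is to obtain this corollary as the direct specialization of Theorem \ref{thm: conc-geom-sum} to a single summand, the only genuine content being the algebraic simplification of the exponent into the stated closed form. So the first and only substantive step is to set $n = 1$ in Theorem \ref{thm: conc-geom-sum}. With a single variable we have $S_1 = X_1 = X$, hence $\E[S_1] = \E[X]$, and the maximum $\max_i \E[X_i]$ is taken over the singleton index set and therefore also equals $\E[X]$. Consequently the prefactor in the exponent collapses,
$$
    \frac{\E[S_1]}{\max_i \E[X_i]} = \frac{\E[X]}{\E[X]} = 1,
$$
so both tail bounds of Theorem \ref{thm: conc-geom-sum} read $\P(X \geq t\E[X]) \leq e^{-(t - 1 - \log t)}$ and $\P(X \leq t\E[X]) \leq e^{-(t-1-\log t)}$ on their respective ranges $t \geq 1$ and $t \leq 1$.

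The second step is purely to rewrite $e^{-(t-1-\log t)}$ in the claimed form. I would use $-(t - 1 - \log t) = 1 - t + \log t$ and exponentiate, giving $e^{1-t} e^{\log t} = t\, e^{1-t}$, which is exactly the right-hand side appearing in both inequalities of the corollary. This identity holds verbatim for the upper tail ($t \geq 1$) and the lower tail ($t \leq 1$), so no case distinction beyond the ranges already carried from Theorem \ref{thm: conc-geom-sum} is needed.

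There is essentially no obstacle here: the corollary is a transparent specialization, and the only thing to verify is that the validity hypotheses of Theorem \ref{thm: conc-geom-sum}, namely that $X$ is discrete log-concave on $\N \setminus \{0\}$ and $n \geq 1$, are met with $n = 1$, which is immediate. If one preferred a self-contained argument, one could instead unwind the proof of Theorem \ref{thm: conc-geom-sum} for $n=1$ directly: majorize $X$ by a geometric random variable $Z$ of matching mean via Theorem \ref{Major-log}, bound $\E[e^{\theta X}] \leq \E[e^{\theta Z}]$ through the convex order applied to $x \mapsto e^{\theta x}$, estimate this moment generating function by Lemma \ref{lem: moment generating function bound} with $\lambda = p = 1/\E[X]$, and optimize the Markov bound by the choice $\theta = \lambda(1 - 1/t)$; this reproduces the same exponent and hence the same $t\,e^{1-t}$ bound. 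Either route is routine, so I would simply invoke Theorem \ref{thm: conc-geom-sum} and record the exponent simplification.
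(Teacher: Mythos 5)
Your proposal is correct and matches the paper exactly: the paper obtains this corollary by taking $n=1$ in Theorem \ref{thm: conc-geom-sum}, with the prefactor $\E[S_1]/\max_i \E[X_i]$ collapsing to $1$ and the exponent simplifying via $e^{-(t-1-\log t)} = t e^{1-t}$. No further comment is needed.
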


\begin{remark}\label{general-bound-discrete}

One may obtain concentration bounds for $\sum_{i=1}^n a_i X_i$, $a_i > 0$, via the same majorization approach since if $X_i \prec_{cx} Z_i$, then for all $\theta \in \R$,
$$ \E[e^{\theta a_i X_i}] \leq \E[e^{\theta a_i Z_i}]. $$
For example, for log-concave distributions on $\N \setminus \{0\}$, following the proof of Theorem \ref{thm: conc-geom-sum} yields
$$ \P \left( \sum_{i=1}^n a_i X_i \geq t \, \E[\sum_{i=1}^n a_i X_i] \right) \leq e^{- \left(\min_i a_i \E[X_i] \right) \E[\sum_{i=1}^n a_i X_i](t-1-\log(t))}, $$
and for all $t \leq 1$,
$$ \P \left( \sum_{i=1}^n a_i X_i \leq t \, \E[\sum_{i=1}^n a_i X_i] \right) \leq e^{- \left(\min_i a_i \E[X_i] \right) \E[\sum_{i=1}^n a_i X_i](t-1-\log(t))}. $$

\end{remark}

\begin{remark}

One may obtain similar concentration bounds for log-concave distributions on $\N$ by applying the majorization argument with the geometric distribution on $\N$, $\P(Z = k) = p(1-p)^k$, $k \geq 0$.

\end{remark}

We now move on to discuss moment bounds for discrete log-concave distributions on $\N$. To establish such bounds, by Theorem \ref{mom-compar} we need to estimate the quantity $A_{\alpha,\beta} = \mathbb{E}[Z^{\beta}]^{\frac 1 \beta} / \mathbb{E}[Z^{\alpha}]^{\frac 1 \alpha}$, with $\beta \geq \alpha$, where $Z$ is a geometric distribution on $\N$. First, we note that there is no absolute comparison for all $Z$ geometric. For example, taking $\beta=2$ and $\alpha=1$, we have for $Z \sim (1-p)^k p$, $k \geq 0$,
$$ \E[Z] = \frac{1-p}{p}, \quad  \E[Z^2] = \frac{(1-p)(2-p)}{p^2}. $$
Therefore,
$$ \frac{\E[Z^2]^{1/2}}{\E[Z]} = \frac{\sqrt{(1-p)(2-p)}}{p} \frac{p}{1-p} = \frac{\sqrt{2-p}}{\sqrt{1-p}} \xrightarrow[p \to 1]{} +\infty. $$
Hence, the supremum over all $Z$ geometric on $\N$ is $+\infty$. Nonetheless, we can establish the following moment bound. 

\begin{proposition}\label{moment-discrete}

Let $X$ be a discrete log-concave random variable on $\N$. Then, for all $0 < \alpha \leq \beta$, we have
$$ \E[X^{\beta}]^{\frac 1 \beta} \leq \frac{\Gamma(\beta +1)^{1/\beta}}{\Gamma(\alpha+1)^{1/\alpha}} \E[X^{\alpha}]^{\frac 1 \alpha} e^{\frac{\Gamma(\alpha+1)^{1/\alpha}}{\alpha \E[X^{\alpha}]^{1/\alpha}}}. $$
In particular, if $\E[X^{\alpha}]^{\frac 1 \alpha} \geq 1$, we have
$$ \E[X^{\beta}]^{\frac 1 \beta} \leq c_{\alpha,\beta} \E[X^{\alpha}]^{\frac 1 \alpha}, $$
where $c_{\alpha, \beta}$ depends only on $\alpha$ and $\beta$. Moreover, one may take $c_{\alpha, \beta} = e^{\frac{\Gamma(\alpha+1)^{1/\alpha}}{\alpha}} \frac{\Gamma(\beta +1)^{1/\beta}}{\Gamma(\alpha+1)^{1/\alpha}}$.

\end{proposition}

\begin{proof}
Note that for $Z$ geometric on $\N$,
$$ \E[Z^{\alpha}] = p \sum_{k \in \N} k^{\alpha} (1-p)^k = p \sum_{k \in \N} k^{\alpha} e^{-k \log(1/(1-p))}. $$
Also, we have
$$ \int_0^{+\infty} x^{\alpha} e^{-x \log(1/(1-p))} dx = \frac{\Gamma(\alpha+1)}{\log(1/(1-p))^{\alpha+1}}. $$
On the other hand,
\begin{eqnarray*}
\int_0^{+\infty} x^{\alpha} e^{-x \log(1/(1-p))} dx & = & \sum_{k \in \N} \int_k^{k+1} x^{\alpha} e^{-x \log(1/(1-p))} dx \\ & \leq & \sum_{k \in \N} e^{-k \log(1/(1-p))} \int_k^{k+1}  x^{\alpha} dx
\\ & = & \frac{1}{\alpha+1} \left[\sum_{k \in \N} (k+1)^{\alpha+1} (1-p)^k  - \sum_{k \in \N} k^{\alpha+1} (1-p)^k \right]
\\ & = & \frac{1}{\alpha+1} \left[ \frac{\E[Z^{\alpha + 1}]}{p(1-p)} - \frac{\E[Z^{\alpha + 1}]}{p} \right]
\\ & = & \frac{1}{\alpha+1} \frac{1}{1-p} \E[Z^{\alpha+1}].
\end{eqnarray*}
Therefore,
$$ \E[Z^{\alpha+1}] \geq \Gamma(\alpha+2) \frac{1-p}{\log(1/(1-p))^{\alpha+1}}. $$
Similarly, we have
\begin{eqnarray*}
\int_0^{+\infty} x^{\alpha} e^{-x \log(1/(1-p))} dx & \geq & \sum_{k \in \N} e^{-(k+1) \log(1/(1-p))} \int_k^{k+1}  x^{\alpha} dx \\ & = & \frac{1}{\alpha+1} \left[\sum_{k \in \N} (k+1)^{\alpha+1} (1-p)^{k+1}  - \sum_{k \in \N} k^{\alpha+1} (1-p)^{k+1} \right]
\\ & = & \frac{1}{\alpha+1} \E[Z^{\alpha+1}].
\end{eqnarray*}
Finally, we have the comparison
$$ (1-p)^{\frac{1}{\alpha+1}} \Gamma(\alpha+2)^{\frac{1}{\alpha+1}} \leq \log(1/(1-p)) \E[Z^{\alpha+1}]^{\frac{1}{\alpha+1}} \leq \Gamma(\alpha+2)^{\frac{1}{\alpha+1}}. $$
We deduce that for all $0 < \alpha \leq \beta$,
\begin{equation}\label{A-alpha}
1 \leq A_{\alpha,\beta} = \frac{\mathbb{E} [Z^{\beta}]^{\frac 1 \beta}}{\mathbb{E} [Z^{\alpha}]^{\frac 1 \alpha}} \leq \frac{1}{(1-p)^{\frac{1}{\alpha}}}\frac{\Gamma(\beta +1)^{1/\beta}}{\Gamma(\alpha+1)^{1/\alpha}}.
\end{equation}
Now, let $X$ be a discrete log-concave random variable on $\N$ and assume that $\beta \geq \alpha > 0$. Then, the constraint $\E[X^{\alpha}] = \E[Z^{\alpha}]$ implies
$$ \E[X^{\alpha}]^{1/\alpha} = \E[Z^{\alpha}]^{1/\alpha} \leq \frac{\Gamma(\alpha+1)^{\frac{1}{\alpha}}}{\log(1/(1-p))}. $$
Therefore,
\begin{equation}\label{1/p}
\frac{1}{1-p} \leq e^{\frac{\Gamma(\alpha+1)^{\frac{1}{\alpha}}}{\E[X^{\alpha}]^{1/\alpha}}}.
\end{equation}
From convex majorization (Theorem \ref{mom-compar}), we also have that the constraint $\E[X^{\alpha}] = \E[Z^{\alpha}]$ implies $\E[X^{\beta}] \leq \E[Z^{\beta}]$. We finally deduce from \eqref{A-alpha} and \eqref{1/p} that
\begin{eqnarray} \E[X^{\beta}]^{\frac 1 \beta} \leq \E[Z^{\beta}]^{\frac 1 \beta} = \E[X^{\alpha}]^{\frac 1 \alpha} \frac{\mathbb{E} [Z^{\beta}]^{\frac 1 \beta}}{\mathbb{E} [Z^{\alpha}]^{\frac 1 \alpha}} & \leq & \frac{1}{(1-p)^{\frac{1}{\alpha}}} \frac{\Gamma(\beta +1)^{1/\beta}}{\Gamma(\alpha+1)^{1/\alpha}} \E[X^{\alpha}]^{\frac 1 \alpha} \\ & \leq & \frac{\Gamma(\beta +1)^{1/\beta}}{\Gamma(\alpha+1)^{1/\alpha}} \E[X^{\alpha}]^{\frac 1 \alpha} e^{\frac{\Gamma(\alpha+1)^{\frac{1}{\alpha}}}{\alpha \E[X^{\alpha}]^{1/\alpha}}}.
\end{eqnarray}
\end{proof}

We note that Proposition \ref{moment-discrete} recovers the following well-known inequality in the continuous setting that can be traced back to Karlin-Proschan-Barlow \cite{karlin1961}, see also Bobkov-Madiman \cite[Corollary 3.2]{bobkov2011concentration}.

\begin{corollary}[Karlin-Proschan-Barlow \cite{karlin1961}]\label{moment-conti}
    Let $X$ be a non-negative continuous log-concave random variable, then
    \begin{align*}
         \E[X^{\beta}]^{\frac 1 \beta} \leq \frac{\Gamma(\beta +1)^{1/\beta}}{\Gamma(\alpha+1)^{1/\alpha}} \E[X^{\alpha}]^{\frac 1 \alpha}. 
    \end{align*}
\end{corollary}

\begin{proof}
    If $\mathbb{E}[X] = 0$, then $X=0$ and the proof is complete. Consider $\mathbb{E}[X] > 0$, and denote by $f$ the density of $X$. For $\varepsilon > 0$ define the log-concave probability sequence $f_\varepsilon \colon \mathbb{N} \to \mathbb{R}$ by $f_\varepsilon(n) = \frac{f(\varepsilon n)}{ \sum_m f(\varepsilon m)}$ and let $X_\varepsilon$ be a random variable with such probability mass function. With the definition
    $$\psi_p(\varepsilon) = \frac{ \mathbb{E}[X^p]^{\frac 1 p}}{\varepsilon \mathbb{E}[X_\varepsilon^p]^{\frac 1 p}},$$ we have
    \begin{align*}
        \mathbb{E} [X^\beta]^{\frac 1 \beta} 
            = 
                \varepsilon \psi_\beta(\varepsilon) \mathbb{E} [ X_\varepsilon^\beta]^{\frac 1 \beta}
            &\leq 
                \varepsilon \psi_\beta(\varepsilon)\frac{\Gamma(\beta + 1)^{1/\beta}}{\Gamma(\alpha + 1)^{1/ \alpha}} \mathbb{E}[X_\varepsilon^\alpha]^{\frac 1 \alpha} \exp \left[{\frac{\Gamma(\alpha+1)^{\frac{1}{\alpha}}}{\alpha \, \mathbb{E}[X_\varepsilon^{\alpha}]^{\frac{1}{\alpha}}}} \right]
                    \\
            &=
             \frac{\psi_\beta(\varepsilon)\Gamma(\beta + 1)^{1/\beta}}{\psi_\alpha (\varepsilon)\Gamma(\alpha + 1)^{1/ \alpha}} \mathbb{E}[X^\alpha]^{\frac 1 \alpha} \exp \left[\frac{\Gamma(\alpha+1)^{\frac{1}{\alpha}}}{\alpha}  \frac{\varepsilon \psi_{\alpha}(\varepsilon)}{\E[X^{\alpha}]^{\frac 1 \alpha}}\right].
    \end{align*}
    Taking $\varepsilon \to 0$ completes the proof as $\psi_p(\varepsilon) \to 1$ for any $p > 0$, since $\sum_n (n \varepsilon)^p f(n\varepsilon) \varepsilon$ is a Riemann sum approximation of $\int_0^\infty x^p f(x) dx$.
\end{proof}

\begin{remark}

Corollary \ref{moment-conti} can be derived directly by majorization as well (Theorem \ref{mom-compar}). It remains to note that
$$ A_{\alpha, \beta} = \frac{\E[Z^{\beta}]^{\frac{1}{\beta}}}{\E[Z^{\alpha}]^{\frac{1}{\alpha}}} = \frac{\Gamma(\beta+1)^{\frac{1}{\beta}}}{\Gamma(\alpha+1)^{\frac{1}{\alpha}}}, $$
when $Z$ is an exponential random variable.

\end{remark}

The next result complements Proposition \ref{moment-discrete}.

\begin{proposition}\label{cor: discrete logconcave moments}

Let $X$ be a discrete log-concave random variable on $\N$. Let $0 < \alpha \leq 1$. Then, for all $\beta \geq \alpha$, we have
$$ \E[X^{\beta}]^{\frac 1 \beta} \leq \frac{\Gamma(\beta +1)^{1/\beta}}{\Gamma(\alpha+1)^{1/\alpha}} (\E[X^{\alpha}]+1)^{\frac 1 \alpha}. $$

\end{proposition}

\begin{proof}
Assume that $\alpha \leq 1$ and let $\beta \geq \alpha$. Note that for all integer-valued random variables $X$ with p.m.f. $f$, we have
$$ \E[X^{\alpha}] = \sum_{k \geq 1} k^{\alpha} f(k) \leq   \sum_{k \geq 1} k f(k) = \E[X]. $$
Therefore, the constraint $\E[X^{\alpha}] = \E[Z^{\alpha}]$, for $Z$ geometric on $\N$, implies
$$ \E[X^{\alpha}] \leq \E[Z] = \frac{1-p}{p}, $$
and thus
$$ \frac{1}{1-p} \leq 1+\frac{1}{\E[X^{\alpha}]}. $$
Since the constraint $\E[X^{\alpha}] = \E[Z^{\alpha}]$ also implies
$$ \E[X^{\beta}] \leq \E[Z^{\beta}], $$
we have
\begin{eqnarray*}
\E[X^{\beta}]^{\frac 1 \beta} \leq \frac{1}{(1-p)^{\frac{1}{\alpha}}} \frac{\Gamma(\beta +1)^{1/\beta}}{\Gamma(\alpha+1)^{1/\alpha}} \E[X^{\alpha}]^{\frac 1 \alpha} \leq \frac{\Gamma(\beta +1)^{1/\beta}}{\Gamma(\alpha+1)^{1/\alpha}} (\E[X^{\alpha}]+1)^{\frac 1 \alpha}.    
\end{eqnarray*}
\end{proof}





For integer moments we derive sharp bounds for discrete log-concave random variables. One of several results in this direction is the following theorem of Keilson \cite{keilson1972threshold}, which we improve upon in the ultra log-concave case (see Section \ref{sec: ULC}). These can be considered discrete analogs of the well known fact that in the continuous setting, the function $p \mapsto \frac{1}{\Gamma(p+1)} \int_0^{+\infty} t^p f(t) dt$ is log-concave whenever $f$ is log-concave. This also complements the recent confirmation in \cite{melbourne2021discrete} of the conjectured log-concavity of the map $p \mapsto p \sum_k x_k^p$ for $x_k$ a monotone log-concave sequence (see \cite{MT20}). We use the notation $(m)_n = \frac{m!}{(m-n)!}$, with the convention that $(m)_n = 0$ if $m<n$.

\begin{theorem}[Keilson \cite{keilson1972threshold}]\label{thm: factorial moments}

Let $X$ be a discrete log-concave random variable supported on $\N$. Then the function
$$ \Phi \colon p \mapsto \frac{1}{p!} \E[(X)_p] $$
is log-concave on $\N$, where $\E[(X)_p]$ is the $p$-th factorial moment of $X$.

\end{theorem}

We provide a new proof of Theorem \ref{thm: factorial moments} based on convex majorization.

\begin{proof}[Proof of Theorem \ref{thm: factorial moments}]
Denote by $f$ the probability mass function of $X$. Denote, for $p \in \N$, $\Phi(p) = \frac{1}{p!} \E[(X)_p] \geq 0$. We want to prove that for all $p \in \N$,
$$ \Phi(p+1) \geq \sqrt{\Phi(p) \Phi(p+2)}. $$
For this, fix $p \in \N$. Assume $\Phi(p+2) > 0$, otherwise the result is trivial. Therefore $\E[(X)_{p+2}] > 0$, which implies that there exists $k_0 \geq p+2$ such that $f(k_0) > 0$, and thus $\Phi(p), \Phi(p+1) > 0$. Consider the log-affine function $g(k) = ca^k$, $k \in \N$, where $c$ and $a$ are chosen such that
$$ a = \frac{\Phi(p+1)}{\Phi(p) + \Phi(p+1)} \in (0,1) \quad \mbox{and} \quad c = \left( \frac{\Phi(p)}{\Phi(p+1)} \right)^{p+1} \frac{\Phi(p) \Phi(p+1)}{\Phi(p) + \Phi(p+1)}. $$
Such a choice ensures that
\begin{equation}\label{eq-kleitman}
\sum_{k \geq 0} (k)_p ca^k = \E[(X)_p] \quad \mbox{and} \quad \sum_{k \geq 0} (k)_{p+1} ca^k = \E[(X)_{p+1}],
\end{equation}
since for all $l \in \N$,
$$ \sum_{k \geq 0} (k)_l ca^k = c a^l \sum_{k \geq l} k \cdots (k-l+1) a^{k-l} = c a^l \frac{l!}{(1-a)^{l+1}}, $$
where the last identity follows by taking the $l$-th derivative of $\sum_{k \geq 0} a^k = \frac{1}{1-a}$. 
In particular,
\begin{equation}\label{p+2}
\sum_{k \geq 0} (k)_{p+2} \, c a^k = (p+2)! \, \frac{c}{1-a} \left( \frac{a}{{1-a}} \right)^{p+2} = (p+2)! \, \frac{\Phi(p+1)^2}{\Phi(p)}.
\end{equation}
Define
$$ \widetilde{f} = \frac{(k)_p f}{\E[(X)_p]} \quad \mbox{and} \quad \widetilde{g} = \frac{(k)_p g}{\E[(X)_p]}. $$
Clearly $\widetilde{f}$ defines a distribution and $\widetilde{g}$ as well by \eqref{eq-kleitman}. Denote $\widetilde{X}$ (resp. $\widetilde{Z}$) the random variable with distribution $\widetilde{f}$ (resp. $\widetilde{g}$), and note that $\widetilde{X} \prec_{lc} \widetilde{Z}$ since $f$ is log-concave and $g$ is log-affine. Define the non-decreasing function $T(k) = \frac{(k)_{p+1}}{(k)_p} = (k-p)1_{\{k \geq p+1\}}$, and note that
$$ \E[T(\widetilde{X})] = \frac{\E[(X)_{p+1}]}{\E[(X)_{p}]} = \E[T(\widetilde{Z})], $$
where the last equality follows by \eqref{eq-kleitman}. Therefore, taking $\mu$ to be the distribution of $\widetilde{Z}$, one may apply Theorem \ref{thm: extension of convex domination theorem} to deduce that $T(\widetilde{X}) \prec_{cx} T(\widetilde{Z})$, and thus for any convex function $\varphi$,
\begin{equation}\label{eq-maj-kleitman}
\E[\varphi(T(\widetilde{X}))] \leq \E[\varphi(T(\widetilde{Z}))].
\end{equation}
Choosing the convex function $\varphi(k) = k(k-1)$ yields 
$$ \varphi(T(k)) = (k-(p+1))(k-p)1_{\{k \geq p+2\}} = \frac{(k)_{p+2}}{(k)_p}, $$
hence inequality \eqref{eq-maj-kleitman} yields 
$$ \E[(X)_{p+2}] \leq \sum_{k \geq 0} (k)_{p+2} g(k) = (p+2)! \, \frac{\Phi(p+1)^2}{\Phi(p)}, $$
where the last identity follows from \eqref{p+2}. This gives the desired result.
\end{proof}

\begin{remark}
    
One easily deduces from Theorem \ref{thm: factorial moments} the following sharp moment inequalities
$$ \left(\frac{\E[(X)_{r+1}]}{(r+1)!}\right)^{\frac{1}{r+1}} \leq \left(\frac{\E[(X)_r]}{r!}\right)^{\frac{1}{r}}, \quad r \in \N, $$
holding for all discrete log-concave random variables.

\end{remark}

We end this section with entropy bounds for discrete log-concave random variables.

\begin{theorem}\label{thm: max-entropy-lc}
    For $\alpha \leq 1$ and $X$ a log-concave distribution on $\mathbb{N}$ with $\mathbb{E}[X] = \mu$
    \[
        H_\alpha(X) \leq H_\alpha(Z) = \frac{\log \left( (1 + \mu)^\alpha - \mu^\alpha \right)}{\alpha - 1} < \log  (1  +\mu)  + \frac{\log \alpha }{ \alpha - 1}
    \]
   where $Z$ is a geometric distribution on $\mathbb{N}$ with parameter $p = \frac{\mu}{1 +\mu}$.
\end{theorem}

\begin{proof}
    The proof follows immediately from Lemma \ref{entropy} and Theorem \ref{thm: maximum entropy dist}. The strict inequality follows from $(1 + \mu)^\alpha - \mu^\alpha > \alpha (1+\mu)^{\alpha - 1}$ by the Lagrange mean value theorem.
\end{proof}

Note that the geometric distribution maximizes the Shannon entropy among {\it all} $\mathbb{N}$-valued random variables with fixed mean, so the above theorem is only interesting for $\alpha < 1$. When $\alpha > 1$, the situation is more intricate. In fact, it turns out that Theorem \ref{thm: max-entropy-lc} does not hold for $\alpha > 1$. Indeed, let $Z$ be a geometric distribution on $\N$ with parameter $p \in (\frac{1}{2}, 1)$ and let $X$ be a Bernoulli distribution with parameter $\frac{1-p}{p}$, so that $\E[X] = \E[Z]$. We have
$$ e^{(1-\alpha) H_{\alpha}(X)} = \left( \frac{1-p}{p} \right)^{\alpha} + \left( \frac{2p-1}{p} \right)^{\alpha}, $$
$$ e^{(1-\alpha) H_{\alpha}(Z)} = \frac{p^{\alpha}}{1-(1-p)^{\alpha}}. $$
We claim that as $p \to 1$,
$$ \left( \frac{1-p}{p} \right)^{\alpha} + \left( \frac{2p-1}{p} \right)^{\alpha} < \frac{p^{\alpha}}{1-(1-p)^{\alpha}}, $$
and therefore $H_{\alpha}(X) > H_{\alpha}(Z)$. Indeed, the above inequality is equivalent to
$$ p^{2 \alpha} - [(1-p)^{\alpha} + (2p-1)^{\alpha}](1-(1-p)^{\alpha}) > 0. $$
L'H\^opital's rule can be used to show that
$$ \lim_{p \to 1} \frac{p^{2 \alpha} - [(1-p)^{\alpha} + (2p-1)^{\alpha}](1-(1-p)^{\alpha})}{(p-1)^2} = \alpha, $$
and the result follows. 

Similarly, comparisons of matched mean Bernoulli, Poisson, and Binomial, show that for small enough fixed means, neither the Poisson in the class of ultra log-concave distributions, nor the Binomial in the class of ultra log-concave distributions of order $n$, are maximizers of the $\alpha$-R\'enyi entropy for $\alpha > 1$, which we summarize in the following remark.

\begin{remark}
Let $\alpha > 1$. There exists $\mu > 0$ and a Bernoulli random variable $X$ such that
$$ H_{\alpha} (X) >  \max\{H_{\alpha}(Z_1),H_\alpha(Z_2), H_\alpha(Z_3)\} $$
where $Z_1$ is a geometric, $Z_2$ a Poisson, and $Z_3$ a Binomial satisfying $\E[Z_i] = \E[X] = \mu$.
\end{remark}

\section{Ultra log-concavity of order $n$ and ULC}\label{sec: ULC}

\begin{theorem} \label{thm: ULC(n) concentration}
    Let $X$ be a ULC$(n)$ random variable. Let $p = \mathbb{E}[X]/n$. Then for $t \geq 0$,
    \begin{align*}
        \mathbb{P}(X \geq (p+t)n ) &\leq e^{-n D( p + t || p)},
        \\
        \mathbb{P}(X \leq (p - t)n) &\leq e^{-n D(p - t || p)}. 
    \end{align*}
    Here, $D(\alpha || \beta) \coloneqq \alpha \log \frac \alpha \beta + (1 - \alpha) \log \frac{1 - \alpha}{1 - \beta}$.
\end{theorem}

\begin{proof}
        Let $X$ be ULC$(n)$. Then by Theorem \ref{Major-log}, $X \prec_{cx} Z$ where $Z$ is Binomial$(n,p )$ with $p = \mathbb{E}[X]/n$.  By Theorem \ref{thm: Chernoff majorization},
        \[
            \mathbb{P}(X \geq (p+t) n) 
            \leq \inf_{\lambda > 0} \left( (1-p)e^{- \lambda (p+t) } + pe^{-\lambda (p+t-1)} \right)^n.
        \]
        Evaluating the right side at its minimizer, that is when $e^\lambda =  \frac{(1-p)(p+t)}{(1-p-t) p}$ if $p+t < 1$ and when $\lambda \to +\infty$ if $p+t \geq 1$, gives the result. The lower bound is derived similarly.
\end{proof}

Note that by Pinsker's inequality (see, e.g., \cite[Theorem 7.10]{polyanskiy2025information}) one has $D( p + t || p)$ and $D( p - t || p)$ larger than $2 t^2$, so that
\[
    \mathbb{P} \left( |X - \mathbb{E}[X] | \geq \lambda \sqrt{n} \right) \leq 2 e^{-2 \lambda^2},
\]
and one recovers more transparently Gaussian concentration of ultra log-concave variables of order $n$.

We also recover the recent concentration inequalities derived for ULC random variables in \cite{aravinda2021concentration} used to generalize and improve the concentration inequalities for intrinsic volume random variables in \cite{lotz2020concentration} (see also \cite{NS22}). See Section \ref{sec: intrinsic} for further comments on the concentration of intrinsic volumes.
\begin{theorem} \label{thm: ULC(infinity) concentration}
Let $X$ be an ultra log-concave random variable. Then, for all $t \geq 0$,
$$ \P(X-\E[X] \geq t) \leq e^{-\frac{t^2}{2(t + \E[X])}}, $$
and
$$ \P(X - \E[X] \leq - t) \leq e^{-\frac{t^2}{2 \E[X]}}. $$
\end{theorem}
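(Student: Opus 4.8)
The plan is to compare $X$ with a Poisson random variable of the same mean, pass to moment generating functions, and then reduce the resulting Poisson tail to the stated expression by a one-dimensional convexity estimate.

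First I would set $\lambda := \E[X]$ (assuming $\lambda > 0$, the case $\lambda = 0$ being trivial) and let $Z \sim \mathrm{Poisson}(\lambda)$. Since $X$ is ultra log-concave we have $X \prec_{lc} Z$, and since $\E[X] = \lambda = \E[Z]$ the expectations match, so the observation recalled above (Whitt; Karlin--Studden) yields $X \prec_{cx} Z$. Applying the defining inequality of the convex order to the convex functions $x \mapsto e^{sx}$ gives the moment generating function domination
$$\E[e^{sX}] \le \E[e^{sZ}] = e^{\lambda(e^s - 1)}, \qquad s \in \R.$$

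Next I would run the usual Chernoff argument. For the upper tail, for $t \ge 0$ and $s > 0$, Markov's inequality gives $\P(X \ge \lambda + t) \le e^{-s(\lambda+t)}\E[e^{sX}] \le \exp\big(\lambda(e^s-1) - s(\lambda+t)\big)$; optimizing in $s$ (the minimizer is $s = \log(1 + t/\lambda)$) produces the classical Poisson bound $\P(X - \E[X] \ge t) \le \exp\big(t - (\lambda + t)\log(1 + t/\lambda)\big)$. Symmetrically, applying Markov to $e^{-sX}$ with $s>0$ and optimizing gives, for $0 \le t < \lambda$, the bound $\P(X - \E[X] \le -t) \le \exp\big(-t - (\lambda-t)\log(1 - t/\lambda)\big)$, while the range $t \ge \lambda$ is immediate because $X$ is supported on $\N$: indeed $\P(X \le \lambda - t) = 0$ for $t > \lambda$, and $\P(X=0) \le e^{-\lambda}$ for $t = \lambda$ by letting $s \to \infty$.

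It remains to absorb these into the stated bounds, which is the only genuinely computational step. Writing $u = t/\lambda$ for the upper tail, the claim $\P(X - \E[X] \ge t) \le e^{-t^2/(2(t+\lambda))}$ reduces to $(1+u)\log(1+u) \ge u + \frac{u^2}{2(1+u)}$; setting $\psi(u)$ to be the difference of the two sides, one checks $\psi(0) = \psi'(0) = 0$ and $\psi''(u) = \frac{u(u+2)}{(1+u)^3} \ge 0$, so $\psi \ge 0$ on $[0,\infty)$. For the lower tail, writing $v = t/\lambda \in [0,1)$, the claim $\P(X - \E[X] \le -t) \le e^{-t^2/(2\lambda)}$ reduces to $(1-v)\log(1-v) + v - \frac{v^2}{2} \ge 0$; the corresponding function $\phi$ satisfies $\phi(0) = \phi'(0) = 0$ and $\phi''(v) = \frac{v}{1-v} \ge 0$, hence $\phi \ge 0$. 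I expect no conceptual obstacle here: the entire content is the reduction to Poisson through the convex order, after which the Chernoff optimization and the two convexity lemmas are routine.
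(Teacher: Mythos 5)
Your proposal is correct and follows exactly the paper's route: majorize $X$ by a Poisson of the same mean via the convex order, deduce domination of moment generating functions, and conclude by the Chernoff--Markov argument. The only difference is that you carry out in full the final optimization and the two convexity estimates reducing the Poisson--Chernoff bound to the stated form, whereas the paper defers these routine computations to the cited reference \cite{marsiglietti2022Concentration}; your computations (including the boundary cases $t \geq \E[X]$ for the lower tail) check out.
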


\begin{proof}
Let $Z$ be a Poisson distribution with parameter $\lambda = \E[X]$. Choosing the convex function $x \mapsto e^{tx}$ for arbitrary $t \in \R$ yields a pointwise dominance of moment generating functions
$$ \E[e^{tX}] \leq \E[e^{tZ}] = e^{\E[X] (e^t - 1)}. $$
The result then follows from a standard application of Markov's inequality (see \cite{aravinda2021concentration} for the details).
\end{proof}

The next result provides a strengthening of Keilson's result in Theorem \ref{thm: factorial moments} under ultra log-concavity.

\begin{theorem} \label{thm: factorial moments ULC}
    For $X$ a ULC($n$) random variable,
        \[
            \Phi \colon p \mapsto \frac{\mathbb{E}[(X)_p]}{(n)_p}
        \]
        is log-concave, or equivalently
        \begin{align} \label{eq: logconcavity of factorial moments}
            \mathbb{E}[ (X)_p]^2 \geq c_n(p) \mathbb{E}[(X)_{p+1}] \ \mathbb{E}[ (X)_{p-1}]
        \end{align}
        with $c_n(p) = 1 + \frac{ 1 }{n - p}$.  When $X$ is ULC, this is interpreted as $p \mapsto \mathbb{E}[(X)_p]$ is log-concave, or that \eqref{eq: logconcavity of factorial moments} holds with $c_\infty(p) = 1$ for all $p$.
\end{theorem}


\begin{proof}
    The proof  of Theorem \ref{thm: factorial moments} can be repeated, one needs only to utilize that $Z$ binomial$(n,q)$ satisfies
    \[
        \mathbb{E}[(Z)_l] = (n)_r q^l,
    \]
    for the ULC$(n)$ case; while $Z$ Poisson($\lambda$) satisfies
    \[
        \mathbb{E}[(Z)_l] = \lambda^l,
    \]
    for the ULC case.
\end{proof}


\begin{remark}

We easily deduce from Theorem \ref{thm: factorial moments ULC} the following sharp moment bounds for ULC$(n)$ random variables,
$$ \left(\frac{\E[(X)_{r+1}]}{(n)_{r+1}}\right)^{\frac{1}{r+1}} \leq \left(\frac{\E[(X)_r]}{(n)_r}\right)^{\frac{1}{r}}, \quad 0 \leq r \leq n-1. $$
In particular, ultra log-concave random variables satisfy $\E[(X)_{r+1}]^{\frac{1}{r+1}} \leq \E[(X)_r]^{\frac{1}{r}}$, for $r \in \N$.

\end{remark}

We end this section with a sharp entropy bound for ULC$(n)$ random variables, which is an immediate application of Lemma \ref{entropy} and Theorem \ref{thm: maximum entropy dist}.

\begin{theorem}
    Let $\alpha \leq 1$ and let $X$ be a ULC$(n)$ (resp. ULC) random variable. Then,
    \[
        H_\alpha(X) \leq H_\alpha(Z)
    \]
   where $Z$ is a binomial (resp. Poisson) distribution with $\E[Z] = \E[X]$.
\end{theorem}

\section{Log-concavity of intrinsic volumes}\label{sec: intrinsic}

Together with Aravinda in \cite{aravinda2021concentration}, the authors proved the pointwise domination of the moment generating function of ultra log-concave random variables with a fixed mean to derive concentration inequalities (using significantly more complicated techniques consisting in the identification of extreme points satisfying a linear constraint, see \cite{marsiglietti2020geometric}).  Using the fact that all intrinsic volume random variables are log-concave with respect to the Poisson of the same mean, one can conclude that intrinsic volumes exhibit at least ``Poisson-type Concentration''. The approach here immediately and easily implies all the results of \cite{aravinda2021concentration} as well as \cite{lotz2020concentration} as special cases.

Given convex bodies $K_1, K_2, \dots, K_n$ in $\mathbb{R}^d$ the function $f: [0,\infty)^n \to [0,\infty)$ defined by
\[
    f(t) = \vol_d(t_1 K_1 + \cdots + t_n K_n)
\]
is a $d$-homogeneous polynomial, and as such can be written as
\[
    f(t) = \sum_{i_1, \dots, i_d =1}^n V(K_{i_1}, \dots, K_{i_d} ) t_{i_1} \cdots t_{i_d}
\]
with coefficients $V(K_{i_1}, \dots, K_{i_d})$ that are symmetric under permutation of $(i_1, \dots, i_d)$.  The coefficient $V(K_{i_1}, \dots, K_{i_d})$ is the mixed volume of $K_{i_1}, \dots, K_{i_d}$.  For example when $K_i = K$, by the homogeneity of the Lebesgue measure, we have
\[
    \vol_d(t K) = t^d V(K,\dots, K)
\]
so that $V(K, \dots, K) = \vol_d(K)$.
\begin{theorem}[Alexandrov-Fenchel]
    For $d$-dimensional convex bodies $K_1, \dots, K_d$ we have the following inequality for mixed volumes
    \[
        V^2(K_1, K_2, K_3, \dots, K_d) \geq V(K_1, K_1, K_3, \dots, K_d) V(K_2, K_2, K_3, \dots, K_d).
    \]
\end{theorem}
Denote by $B_{j}$ the $j$-dimensional Euclidean unit ball and letting $\omega_{j} = \vol_{j} (B_j)$, and $B$ the $d$-dimensional unit ball.  Associate to a single $d$-dimensional convex body $K$ the sequence of {\it intrinsic volumes} given by $\{V_0(K), \dots, V_d(K) \}$ where $V_j(K)$ are the coefficients of the polynomial
\[
    \vol_d(K + t B_d) = \sum_{j=0}^d V_j(K) t^{d-j} \omega_{d-j}.
\]
Comparing with the expression of $\vol_d(K+tB)$ in terms of mixed volumes with $t = t_2$, $K_1 = K$ and $K_2 = B = B_d$ we can write $\vol_d(K + t B)$ as,
\begin{align*}
     \vol_d(t_1 K_1 + t_2 K_2) \bigg|_{t_1 = 1} 
        &= 
            \sum_{i_1, \dots, i_d = 1}^2 V(K_{i_1}, \dots, K_{i_d}) t_{i_1} \dots t_{i_d}
                \\
        &= 
            \sum_{j=0}^d \binom{d}{j}V(\underbrace{K, \dots, K}_{j \ { times}}, \overbrace{B, \dots, B}^{n-j \ { times}}) t^{d-j}.
\end{align*}
Writing $V(K^{(j)}, B^{(n-j)})$ for $V(\underbrace{K, \dots, K}_{j \ { times}}, \overbrace{B, \dots, B}^{n-j \ { times}})$ and equating the coefficients
\[
    V_j(K) = \frac{ \binom{d}{j}}{\omega_{d-j}} V(K^{(j)}, B^{(n-j)}).
\]
Note that the subscript $d$ is omitted in the expression of $V_j(K)$.  Intrinsic volumes of a convex body are independent of the ambient space in which they are embedded, see \cite{mcmullen1991inequalities, schneider2014convex}.
Note that by Alexandrov-Fenchel $j \mapsto V(K^{(j)}, B^{(n-j)}) $ is log-concave, which will be used to derive the following observation.
\begin{corollary} \label{cor: intrinsic volumes lc wrt ball}
    Given a $d$-dimensional convex body $K$, and $B$ the $d$-dimensional unit ball, the sequence
    \[
        j \mapsto \frac{V_j(K)}{V_j(\lambda B)} 
    \]
    is log-concave for $\lambda > 0$.  That is every intrinsic volume sequence is log-concave with respect to the intrinsic volume sequence of a ball.  In particular 
     \begin{equation} \label{eq: quantitative concavity of intrinsic volumes}
     {V^2_j(K)} \geq  \sqrt{ 1 - \frac{1}{d - j}} \ \left(1 + \frac{1}{j} \right) {V_{j-1}(K) V_{j+1}(K)}, \quad j=1, \dots, d-1.
 \end{equation}
    Thus the intrinsic volume sequence of a convex body is strictly ultra log-concave.  Moreover, 
    \begin{equation} \label{eq: convexity for the ball}
        V^2_j(B) \leq \sqrt{ 1 + \frac{1}{d - j - 1}} \ \left(1 + \frac{1}{j} \right) {V_{j-1}(B) V_{j+1}(B)}.
    \end{equation}
\end{corollary}
\begin{proof}
    Note that again comparing coefficients of the homogeneous polynomial $f(t)$ yields $V(\lambda K_1, K_2, \dots, K_n) = \lambda V(K_1, K_2, \dots, K_n)$ which by symmetry gives $V_j(\lambda B) = \lambda^j V_j(B)$. Since $j \mapsto \lambda^j$ is log-affine it suffices to consider the case that $\lambda = 1$.  We have
    \[
        \frac{V_j(K)}{V_j(B)} = \frac{ \frac{ \binom{d}{j}}{\omega_{d-j}} V(K^{(j)}, B^{(n-j)})}{  \frac{ \binom{d}{j}}{\omega_{d-j}} V( B^{(j)}, B^{(n-j)})}  =  \frac{ V(K^{(j)}, B^{(n-j)})}{\omega_d },
    \]
    which is log-concave by the Alexandrov-Fenchel inequality.
    To prove \eqref{eq: quantitative concavity of intrinsic volumes}, we follow the computations of McMullen \cite{mcmullen1991inequalities},
    \[
        \frac{V^2_j(\lambda B)}{V_{j-1}(\lambda B) V_{j+1}(\lambda B)} = \alpha_{d,j} \coloneqq \frac{j+1}{j} \beta_{d-j}
    \]
    where, after use of the identity $x\Gamma(x) = \Gamma(1 + x)$,
    \[
        \beta_s \coloneqq \frac{ \Gamma(s/2) \Gamma(1 + s/2)}{\Gamma^2((1+s)/2)}.
    \]
    Note that by the log-concavity of the first sequence we have $\frac{V^2_j(K)}{V_{j-1}(K) V_{j+1}(K)} \geq \alpha_{d,j}$, and in particular for $j < d$, by considering $B_d \subset \R^{d+1}$, we have $\alpha_{d+1,j} 
 \leq \frac{V^2_j(B_{d})}{V_{j-1}(B_{d}) V_{j+1}(B_{d})} = \alpha_{d,j} $.  Thus $\beta_s$ is decreasing in $s$.  Thus $\beta_s^2 \geq \beta_s \beta_{s+1} = \frac{s+1}{s} \geq \beta_{s+1}^2$.  Thus, we have for Euclidean Balls, and hence for any convex body $K \subseteq \mathbb{R}^d$,
 \[
     {V^2_j(K)} \geq  \sqrt{ 1 - \frac{1}{d - j}} \ \left(1 + \frac{1}{j} \right) {V_{j-1}(K) V_{j+1}(K)}.
 \]
The inequality $\sqrt{1 + \frac{1}{s-1}} \geq \beta_s$ coupled with  $\frac{V^2_j(\lambda B)}{V_{j-1}(\lambda B) V_{j+1}(\lambda B)} = \frac{j+1}{j} \beta_{d-j}$ yields \eqref{eq: convexity for the ball}.
\end{proof}

For a convex body $K$, we define its intrinsic volume random variable $X_K$ to be one given by the distribution
\[
    \mathbb{P}( X_K = j ) = \tilde{V}_{K}(j) \coloneqq \frac{V_j(K)}{\sum_{i=0}^d V_i(K)}.
\]
The central intrinsic volume of $K$ is defined to be
\[
    \Delta(K) = \mathbb{E}[X_K] = \sum_{j=0}^d {j \ \tilde{V}_K(j)}.
\]

\begin{proposition} \label{prop: central intrinsic volume increases on scaling}
    For a convex body $K$ the central intrinsic volume increases from $0$ to $d$ with scaling, that is the function 
        $$\lambda \mapsto \Delta(\lambda K)$$
        is strictly increasing for $\lambda \geq 0$, $\lim_{\lambda \to 0} \Delta(\lambda K) = 0$ and $\lim_{\lambda \to \infty} \Delta(\lambda K) = d$.
\end{proposition}

\begin{proof}
    By homogeneity, $\tilde{V}_{\lambda K}(j) = C(\lambda) \lambda^j \ V(K)$ for $C(\lambda) \coloneqq \left( \sum_{i=0}^d V_i(\lambda K) \right)^{-1}$.  Thus for $\lambda' > \lambda$ the function
    \[
        j \mapsto \frac{\tilde{V}_{\lambda' K}(j)}{\tilde{V}_{\lambda K}(j)} = C \left(\frac{\lambda'}{\lambda} \right)^j,
    \]
    with $C = C(\lambda')/C(\lambda)$, is strictly increasing.  Since by definition $\sum_j \tilde{V}_{\lambda K}(j) = \sum_j \tilde{V}_{\lambda' K}(j) =1$ it follows that there exits $k < d$ such that $\tilde{V}_{\lambda' K}(j) \leq \tilde{V}_{\lambda K}(j)$ for $j \leq k$ and $\tilde{V}_{\lambda' K}(j) >\tilde{V}_{\lambda K}(j)$ otherwise. Thus for $\Phi: \llbracket 0, d \rrbracket \to \mathbb{R}$ strictly increasing, 
    \begin{align*}
        \mathbb{E}[\Phi(X_{\lambda' K})] - \mathbb{E}[\Phi(X_{\lambda K})]
            =
                \sum_{j=0}^d (\Phi(j) - \Phi(k)) (\tilde{V}_{\lambda' K}(j) - \tilde{V}_{\lambda K}(j)) > 0,
    \end{align*}
    as the terms in the summands $\Phi(j) - \Phi(k)$ and $\tilde{V}_{\lambda'K}(j) - \tilde{V}_{\lambda K}(j)$ always have the same sign.  The case that $\Phi(x) = x$ gives our first result.  The limiting arguments are an easy exercise. 
\end{proof}

\begin{definition}[Intrinsic Entropy]
    For $\alpha \in [0,\infty]$ and a convex body $K$, define the intrinsic entropy of order $\alpha$ of $K$ as
    \[
        \hbox{IntEnt}_\alpha(K) \coloneqq H_\alpha(X_K)
    \]
\end{definition} 
That is the intrinsic entropy of order $\alpha$ associated to a convex body is just the $\alpha$-R\'enyi entropy of $X_K$.  The case that $\alpha = 1$ was studied in \cite{lotz2020concentration}.
\begin{theorem}\label{thm:intrinsic-entropy}
    For $\alpha \leq 1$, among convex bodies $K$ in $\mathbb{R}^d$ such that $\Delta(K) = \Delta$, the intrinsic entropy
    \begin{equation} \label{eq: intrinsic entropy maximizer}
        \hbox{IntEnt}_\alpha(K) = H_\alpha(X_K) = \frac{\log \left( \mathbb{E} {\tilde{V}_K^{\alpha -1}(X_K)} \right)}{1-\alpha}
    \end{equation}
    is maximized when $K$ is a $d$-dimensional Euclidean ball.  The optimal inequality independent of dimension is
    \begin{equation} \label{eq: Poisson maximizes entropy}
        \hbox{IntEnt}_\alpha (K) < H_{\alpha}(Z),
    \end{equation}
    where $Z \sim $ Poisson$(\Delta(K))$. 
\end{theorem}
We note that this corrects \cite[Theorem 6.1.13]{lotz2020concentration} where it is claimed that the cube, whose intrinsic volume random variables correspond to binomial distributions, maximizes intrinsic entropy (of order $\alpha =1$) for fixed expectation.  The error in the proof of \cite[Theorem 6.1.13]{lotz2020concentration} is the conflation of the spaces ULC($d$) and the ultra log-concave distributions with finite support on $[0,d]$ which we denote ULC$_{[d]}$($\infty$).  As mentioned in \cite{lotz2020concentration}, the binomial distribution does maximize entropy among ULC($d$) distributions with fixed expectation, see \cite{yu2008maximum}, however from \eqref{eq: convexity for the ball} we see that while $V_j(B)$ is ULC($\infty$), it does not belong to ULC($d$).  In summary, all intrinsic volumes sequences are log-concave with respect to the intrinsic volume of a ball, which is in turn log-concave with respect to a Poisson, however the intrinsic volume sequence of a ball is not log-concave with respect to a binomial.
 We mention in passing that the entropy maximizers of ULC$_{[d]}$($\infty$) for fixed expectation are the truncations of Poisson distributions, as can be proved by similar methods to that below. 

\begin{proof}[Proof of Theorem \ref{thm:intrinsic-entropy}]
    Note that the existence of $\lambda$ such that $\Delta(\lambda B) = \Delta$ is given by  Proposition \ref{prop: central intrinsic volume increases on scaling}.  By Corollary \ref{cor: intrinsic volumes lc wrt ball} the sequence $\tilde{V}_K(j)$ is log-concave with respect to $\tilde{V}_{\lambda B}(j)$ as each is just a constant multiple of $V_j(K)$ and $V_j(\lambda B)$ respectively.  From log-concavity it follows that $ \{ \tilde{V}_K(j) \geq \tilde{V}_{\lambda B}(j) \}$ is an interval $[a,b]$, and by the argument of Proposition \ref{prop: central intrinsic volume increases on scaling} we must have $[a,b] \subset (0,d)$ else we would have contradiction on $\Delta(K) = \Delta(\lambda B)$.  It follows that $X_K \prec_{cx} X_{\lambda B}$ since the densities ``cross twice'' and have matching expectation.  For the $\alpha =1$ case, since $\tilde{V}_{\lambda B}$ is log-concave, $- \log \tilde{V}_{\lambda B}$ is convex,
    \[
        H(X_{\lambda B}) = - \mathbb{E} \log \tilde{V}_{\lambda B}(X_{\lambda B}) \geq - \mathbb{E} \log \tilde{V}_{\lambda B}(X_{K}) \geq H(X_K),
    \]
    where the last inequality is Gibbs' inequality.  The case $\alpha < 1$ follows similarly. From log-concavity and the monotonicity of $L^p$ norms for probability measures (since $\alpha -1 < 0$),
    \[
        V_{\lambda B} (j) \geq \sqrt{V_{\lambda B}(j-1) V_{\lambda B}(j+1)} \geq \left( \frac{V_{\lambda B}^{\alpha - 1}(j-1)}{2} + \frac{V_{\lambda B}^{\alpha -1}(j+1)}{2} \right)^{\frac{1}{\alpha -1}},
    \]
    which yields $V_{\lambda B}^{\alpha - 1}$ to be convex, so that $\mathbb{E}[\tilde{V}_{\lambda B}^{\alpha -1}(X_{\lambda B})] \geq \mathbb{E}[\tilde{V}_{\lambda B}^{\alpha-1}(X_K)]$ with Lemma \ref{entropy} completes the proof.  
    
    As every intrinsic volume sequence is log-concave with respect to the Poisson, \eqref{eq: Poisson maximizes entropy} follows similarly.  
     Equality can be obtained asymptotically in \eqref{eq: Poisson maximizes entropy}, as a Poisson distribution is the limit of $\tilde{V}_{\lambda_d B^d}$ with $d \to \infty$ for $\lambda_d$  chosen to fix $\Delta( \lambda_d B^d)$.
\end{proof}

\section{Continuous setting}\label{sec:continuous}

Let us recall the notion of log-concavity of order $p$.
 
\begin{definition}[Bobkov \cite{bobkov2010gaussian}] \label{def: logconcave order p}
    A non-negative random variable $X$ is log-concave of order $p > 0$, if it possesses a density function $f$ that admits a decomposition such that
    \[
        f(x) = x^{p-1} g(x)
    \]
    for a non-negative, log-concave function $g$.
\end{definition}

Strictly speaking, log-concavity of order $p$ was defined in \cite{bobkov2010gaussian} (see also \cite{bobkov2003spectral, bobkov2011concentration, saumard2014log}) only for  $p \geq 1$.  In this case, $x^{p-1}$ is log-concave, and  the log concavity of order $p$ is a stronger assumption than log-concavity. Such distributions arise as the distribution from the norm of a spherically symmetric log-concave vectors. More explicitly if $X$ is log-concave and spherically symmetric in $\mathbb{R}^d $ with $d \geq 2$, then $\|X\|_2$ has a density on $(0,\infty)$ that is log-concave of order $d-1$.

For our purposes it is convenient to extend the definition of log-concavity of order $p$ to $p \in (0,1)$, as this class corresponds exactly to the densities that are log-concave with respect to a Gamma distribution.
We recall that the Gamma$(p,\beta)$ distribution is to be given by the density
\[
    f(x) = \frac{\beta^p}{\Gamma(p)} x^{p-1} e^{-\beta x}, \quad x > 0.
\]
Thus with Definition \ref{def: logconcave order p} a random variable $X$ is log-concave of order $p$ exactly when it is log-concave with respect to a Gamma$(p, \beta)$ distribution.  Notice that the statement is independent of the choice of $\beta$ and that the case $p =1$ corresponds to ordinary log-concavity. Recall also that for $Z \sim$ Gamma$(p, \beta)$, and $q  > -p$,
\[
    \mathbb{E} [Z^q] = \frac{\Gamma(p + q)}{ \Gamma(p) \beta^p}.
\]

The next result provides a bound on the moment generating function of random variables that are log-concave of order $p > 0$.

\begin{theorem}\label{mgf}
    For $X$ log-concave of order $p > 0$,  with mean $\mu = \mathbb{E}[X]$, $Z \sim$ Gamma$(p, \frac p \mu)$, and $t < \frac{p}{\mu}$
    \[
        \mathbb{E}[e^{tX}] \leq \mathbb{E}[e^{tZ}] = \left( 1 - \frac{\mu}{p}  t \right)^{-p} = \exp \left\{ -p \log \left( 1 - \frac {\mu t}{p} \right) \right\}.
    \]
\end{theorem}

Theorem \ref{mgf} extends and sharpens Bobkov's result \cite[Proposition 3.1]{bobkov2010gaussian} where it is proven that for $|t| < p/2 \mu$ and $p \geq 1$,
\[
    \mathbb{E}[e^{tX}] \leq \exp \left\{ \frac{t^2 \mu^2}{p} + t \mu  \right \}.
\]
 Through the standard application of Markov's inequality discussed in section \ref{sec: concentration}, we obtain the following sharpenings of the tail bounds in \cite{bobkov2010gaussian}.  
\begin{theorem}
    For $X$ log-concave of order $p>0$, $\mu \coloneqq \mathbb{E}[X]$, and $\lambda \geq 1$,
    \[
        \mathbb{P}( X \geq \lambda \mu) \leq  \exp \left\{ - p  \ ( \lambda - \log \lambda -1 \right) \}.
    \]
    For $\lambda \in (0,1)$,
    \[
    \mathbb{P}( X \leq \lambda \mu) \leq \exp \left\{ - p  \ ( \lambda - \log \lambda -1 \right) \}.
    \]
    Equivalently, for $h \geq 0$,
    \[
        \mathbb{P}( X - \mu \geq h \mu ) \leq \exp \{ - p ( h - \log( 1 + h) \}
    \]
    and with the constraint that $h \leq 1$
    \[
    \mathbb{P}( X - \mu \leq -h \mu ) \leq \exp \{ p ( h + \log( 1 - h) \}.
    \]
\end{theorem}

In particular when $p=1$, and similarly to the discrete setting, we extend the tail bounds for sums of independent exponential random variables due to Janson \cite{janson2018tail}, to sums of positive log-concave random variables.

\begin{theorem}\label{thm: conc exponential}

Let $X_1, \dots, X_n$, $n \geq 1$, be independent positive continuous log-concave random variables, with $\mathbb{E}[X_i] = 1$, and  $\lambda_i >0$ with $\lambda \coloneqq \sum_{i=1}^n \lambda_i$ and $S_n \coloneqq \sum_{i=1}^n \lambda_i X_i$.  Then, for all $t \geq 1$,
$$ \P(S_n \geq t \, \lambda) \leq e^{- \lambda \left(\min_i \lambda_i  \right) (t-1-\log(t))}, $$
and for all $t \leq 1$,
$$ \P(S_n \leq t \, \lambda) \leq e^{- \lambda \left(\min_i \lambda_i \right) (t-1-\log(t))}. $$

\end{theorem}

Note that taking $X_i$ to be i.i.d. exponential($1$), and $\lambda_i = \frac 1 {a_i}$ recovers \cite[Theorem 5.1 (i)]{janson2018tail} and \cite[Theorem 5.1 (iii)]{janson2018tail}.


We end this section with sharp moment and entropy bounds.

\begin{theorem}
    For $X$ log-concave of order $p >0$, and $r > q > - p$,
    \[
        \mathbb{E} [X^{r}]^{\frac 1 r} \leq C(p,q,r) \mathbb{E} [X^{q}]^{\frac 1 q}, 
    \]
    where
    $
        C(p,q,r) = \frac{\Gamma(p + r)^{\frac 1 r}}{\Gamma(p + q)^{\frac 1 q}} \Gamma(p)^{\frac 1 q - \frac 1 r}.
    $
\end{theorem}


\begin{theorem}
    For $\alpha \in [0,1]$, among $X$ log-concave of order $p \geq 1$, with mean $\mathbb{E}[X] = \mu$, $Z \sim$ Gamma$(p, \frac p \mu )$ has maximum $\alpha$-R\'enyi entropy.  That is, when $\alpha \in [0,1)$,
    \[
        h_{\alpha}(X) \leq h_{\alpha}(Z) = \frac{1}{1-\alpha} \log \left( \frac{\Gamma(1 + \alpha (p-1))}{\Gamma(p)^{\alpha} \beta^{1-\alpha} \alpha^{1+p\alpha}} \right),
    \]
    and
    \[
        h(X) \leq h(Z) = p - \log \frac p \mu + \log \Gamma(p) + (1-p) \psi(p),
    \]
    where $\psi(p)$ denotes the digamma function given by $\psi(p) \coloneqq \frac{\Gamma'(p)}{\Gamma(p)}$.
\end{theorem}

We omit the proofs of the results in this section, as the proofs are based on convex majorization (Theorems \ref{thm: Chernoff majorization}, \ref{mom-compar} and \ref{thm: maximum entropy dist}), similarly as to the rest of the article.

\vskip5mm
\noindent
{\bf Acknowledgments.}
We thank Mokshay Madiman for pointing out to us the article of Yaming Yu \cite{Yu09:2}.  The second author extends his gratitude to Artem Zvavich for introducing him to the Alexandrov-Fenchel inequality, as well as to Arturo Jaramillo for fruitful discussions. Finally, we thank the anonymous referees for their helpful comments that improved the presentation of the article.

\bibliographystyle{plain}
\bibliography{bibibi}

\vspace*{1cm}

\noindent Arnaud Marsiglietti \\
Department of Mathematics \\
University of Florida \\
Gainesville, FL 32611, USA \\
E-mail: a.marsiglietti@ufl.edu

\vspace{0.8cm}

\noindent James Melbourne \\
Probabilidad y Estad\'isticas\\
Centro de Investigaci\'ones en Matem\'aticas \\
Guanajuato, GTO 36023, MX \\
E-mail: james.melbourne@cimat.mx

\end{document}